\declaretheorem[numberwithin=section]{theorem}
\declaretheorem[sibling=theorem]{lemma}
\declaretheorem[sibling=theorem]{definition}
\declaretheorem[sibling=theorem]{corollary}
\declaretheorem[sibling=theorem]{proposition}
\declaretheorem[numbered=no, name=Proposition]{proposition*}
\declaretheorem[sibling=theorem, style=remark]{remark}
\declaretheorem[numbered=no, name=Theorem]{theorem*}
\newcommand\mc[1]{\mathcal{#1}}
\title{The kernel of the Gysin homomorphism\\ for positive characteristic}
\author{by Claudia Schoemann\thanks{The first author gratefully acknowledges the support by the project “Group schemes, root systems, and
related representations” founded by the European Union - NextGenerationEU through
Romania’s National Recovery and Resilience Plan (PNRR) call no. PNRR-III-C9-2023-
I8, Project CF159/31.07.2023, and coordinated by the Ministry of Research, Innovation
and Digitalization (MCID) of Romania.}\; and Skylar Werner}
\begin{document}

\maketitle

\begin{abstract}
    Let $k$ be an uncountable algebraically closed field of positive characteristic and let $S$ be a smooth projective connected surface over $k$. We extend the theorem on the Gysin kernel from \cite[Theorem 5.1]{PauScho} to also be true over $k$, where it was proved over $\mathbb{C}$. This is done by showing that almost all results still hold true over $k$ via the same argument or by using \'{e}tale base arguments and then using a lift with the Comparison theorems \cite[Theorem 21.1 \& 20.5]{JMilLec} and Tate's Conjecture for finitely generated fields \cite{Tate} and \cite{Zarhin} as needed.
\end{abstract}

\tableofcontents

\bigskip

\newpage

\section{Introduction}
    This paper is based on \cite{PauScho} where $S$ is a smooth projective connected surface over $\mathbb{C}$. Letting $\Sigma$ be the linear system of a very ample divisor $D$ on $S$ with $d = \dim(\Sigma)$, we have the closed embedding $\phi_\Sigma:S \hookrightarrow \mathbb{P}^d$ induced by $\Sigma$. For any closed point $t \in \Sigma = (\mathbb{P}^{d})^\vee$, let $H_t$ be the hyperplane in $\mathbb{P}^d$ defined by $t$ and let $C_t = H_t \cap S$ be the corresponding hyperplane section of $S$. Then we have the closed embedding $r_t:C_t \hookrightarrow S$. Let $\Delta = \{t \in \Sigma = (\mathbb{P}^{d})^\vee : C_t \text{ is singular}\}$ be the discriminant locus of $\Sigma$ and $U = \Sigma \setminus \Delta$ be the set parametrizing smooth curves on $S$. We get the induced map 
    \[
        r_{t*}:H^1(C_t, \mathbb{Z}) \to H^3(S, \mathbb{Z})
    \]
    called the \textit{Gysin homomorphism}\index{Gysin homomorphism!cohomology groups} on cohomology groups. Let 
    \[
        H^1(C_t, \mathbb{Z})_{van} = \ker(r_{t*})
    \]
    be the \textit{vanishing cohomology}\index{vanishing cohomology} of $C_t$. Let 
    \[
        J_t = J(C_t) = J^1(H^1(C_t, \mathbb{Z}))
    \]
    be the \textit{(intermediate) Jacobian}\index{intermediate jacobian} and 
    \[
        B_t = J^1(H^1(C_t, \mathbb{Z})_{van})
    \]
    the abelian subvariety of $J_t$, where we know that $J_t$ is also an abelian variety. We have the \textit{Gysin homomorphism}\index{Gysin homomorphism!Chow groups} 
    \[
        r_{t*}:CH_0(C_t)_{\deg=0} \to CH_0(S)_{\deg=0}
    \]
    on the Chow groups of $0$-cycles of degree zero of $C_t$ and $S$ induced by $r_t$ (see \cite[\S 1.4]{FulInt}), and let the \textit{Gysin kernel}\index{Gysin kernel} be 
    \[
        G_t = \ker(r_{t*}).
    \]
    The following theorem is proved in \cite[Theorem 5.1]{PauScho} using Hodge theory,

    \begin{theorem*}[A theorem on the Gysin kernel]
    \hfill
        \begin{enumerate}[label=(\alph*)]
            \item For every $t \in U$, there is an abelian subvariety $A_t$ of $B_t \subset J_t$ such that
            \[
                G_t = \bigcup_{\text{countable}} \text{translates of } A_t.
            \]

            \item For very general $t \in U$ either, $A_t = 0$ or $A_t = B_t$.
        \end{enumerate}
        where $A_t$ is the unique component containing $0$ and $B_t$ the abelian subvariety associated with\\ $H^1(C_t, \mathbb{Z})_{van}$.
    \end{theorem*}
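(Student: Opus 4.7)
The plan is to walk through the proof of \cite[Theorem 5.1]{PauScho} step by step, and for each step either check that the original argument works over $k$ without essential change, or, when the argument relies on specifically characteristic-zero input (typically classical Hodge theory or properties of singular cohomology), to replace singular cohomology with $\ell$-adic étale cohomology for a prime $\ell \neq \operatorname{char}(k)$ and reduce to the complex statement by a lift. Concretely, I would first spread $S$ together with the very ample divisor $D$ out to a smooth projective family over a finitely generated $\mathbb{Z}$-algebra and pick a geometric generic fibre in characteristic zero; by the comparison theorems \cite[Theorem 21.1 \& 20.5]{JMilLec}, the étale cohomology of $C_t$ and $S$, the Gysin pushforward $r_{t*}$, the vanishing subspace, and the stratification of $\Sigma$ by the discriminant $\Delta$ all match their analogues on the lifted family.

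For the construction of the abelian subvariety $A_t$ in part (a), I would use that the Jacobian $J_t = J(C_t)$ and its Abel--Jacobi map to $CH_0(C_t)_{\deg=0}$ are available for any smooth projective curve in any characteristic, and that the subvariety $B_t$ cut out by $H^1_{\text{ét}}(C_t,\mathbb{Z}_\ell)_{van}$ is a well-defined abelian subvariety of $J_t$ by Poincaré complete reducibility, which is characteristic-free. Under the comparison isomorphism this $B_t$ coincides with the $B_t$ from \cite{PauScho} on the lift, so the abelian subvariety $A_t$ produced there descends to a subvariety of $J_t$ over $k$.

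The countable-union shape of $G_t$ in part (a) is where the uncountability of $k$ is needed. The original argument parametrizes effective $0$-cycles on $C_t$ by symmetric products and the kernel of $r_{t*}$ by countably many closed subschemes indexed by bidegree; one then extracts the irreducible component of $G_t$ through the origin and identifies it with an abelian subvariety. Each of these ingredients works verbatim over an uncountable algebraically closed field, and the uncountability of $k$ is exactly what allows one to choose a very general $t \in U$ lying outside a countable union of proper subvarieties of $\Sigma$, as required for part (b).

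The main obstacle I expect is the monodromy/vanishing-cycle input underlying the dichotomy $A_t = 0$ or $A_t = B_t$ in part (b). Classically this uses the Lefschetz pencil, the Picard--Lefschetz formulas, and the irreducibility of the monodromy representation on the vanishing cohomology of $C_t$, all phrased in singular cohomology. In positive characteristic I would either invoke the $\ell$-adic Picard--Lefschetz theory of Deligne--Katz directly or transport the complex monodromy statement across the lift via the comparison isomorphism, using that the specialisation map on $\ell$-adic cohomology is equivariant for the monodromy action along the smooth locus of $\Sigma$. Once this irreducibility is in hand in the $\ell$-adic setting, the rest of the argument from \cite{PauScho} goes through with $H^1_{\text{ét}}(-,\mathbb{Z}_\ell)$ replacing $H^1(-,\mathbb{Z})$.
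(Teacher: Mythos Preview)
Your proposal is correct and follows essentially the same architecture as the paper: symmetric products and the countability/irredundancy lemmas to build $A_t$ over $k$ in part (a), and $\ell$-adic Picard--Lefschetz plus irreducibility of the vanishing-cycle module for the dichotomy in part (b). Two differences are worth flagging. First, for the inclusion $A_t \subset B_t$ the paper does not spread out over a finitely generated $\mathbb{Z}$-algebra; it takes as a standing hypothesis that the surface admits a smooth projective lift over the Witt vectors $W(k)$ and applies the comparison theorems to that fixed lift, so the paper's theorem carries a liftability assumption that your spreading-out route would avoid. Second, for part (b) the paper commits to the first of your two options: it runs the monodromy argument entirely in characteristic $p$ via Freitag--Kiehl and Deligne, following \cite{BanGul}, and never transports the monodromy statement through the lift. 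In particular the lift is invoked exactly once, for the single cohomological inclusion $H^1_{\text{\'et}}(A_t,\mathbb{Z}_\ell) \subset H^1_{\text{\'et}}(B_t,\mathbb{Z}_\ell)$; the construction of $A_t$ itself as the component of $G_t$ through $0$ is done natively over $k$, not descended from characteristic zero as one sentence of your proposal suggests.
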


    Here we let $k$ be an uncountable algebraically closed field of characteristic $p > 0$. Recall that every algebraically closed field is perfect and thus we have an unramified complete discrete valuation ring (CDVR) $W(k)$ such that $char(W(k)) = 0$ and $k$ is its residue field (see \cite[Chapter II \S 5]{SeLocal}). Let $K = Frac(W(k))$ be the fraction field of $W(k)$ and $\overline{K}$ its algebraic closure.

    Recall in general from \cite{PauScho}, that for any smooth projective surface $\mc{S}_0$ over an algebraically closed field $k$, we get the corresponding smooth curve $C_{t_0} = \mc{S}_0 \cap H_{t_0}$ for every $t_0 \in U_0$ where $U_0$ parameterizes all smooth curves on $\mc{S}_0$ and $H_{t_0}$ is the associated hyperplane defined via $t_0$.

    Assume that we have a smooth projective polarized flat family $\mc{S}$ of surfaces over $W(k)$ (see \autoref{sec:LF}). Then in particular, we have a lift from characteristic $p$ to characteristic $0$, i.e. we have the commutative diagram
    \[
        \begin{tikzcd}
          C_{t_0} \arrow[r, hook] \arrow[d, "r_{t_0}"] & \mc{C} \arrow[d] & C_{t_\eta} \arrow[l, hook] \arrow[d, "r_{t_\eta}"] & C_{t_{\overline{\eta}}} \arrow[l] \arrow[d, "r_{t_{\overline{\eta}}}"] \\
          \mc{S}_0 \arrow[r, hook] \arrow[d] & \mc{S} \arrow[d] & \mc{S}_{\eta} \arrow[l, hook] \arrow[d] & \mc{S}_{\overline{\eta}} \arrow[d] \arrow[l] \\
            \{0\} = Spec(k) \arrow[r] & Spec(W(k)) & Spec(K) = \{\eta\} \arrow[l] & Spec(\overline{K}) = \{\overline{\eta}\} \arrow[l]
        \end{tikzcd}
    \]
    where
    \[
        \mc{S}_0 = \{0\} \times_{Spec(W(k))} \mc{S}, \text{ the special fiber}\index{special fiber},
    \]
    \[
        \mc{S}_\eta = \{\eta\} \times_{Spec(W(k))} \mc{S}, \text{ the generic fiber}\index{generic fiber},
    \]
    \[
        \mc{S}_{\overline{\eta}} = \{\overline{\eta}\} \times_{\{\eta\}} \mc{S}_\eta, \text{ the geometric generic fiber}\index{geometric generic fiber}.
    \]

    Recall that the \textit{Lefschetz's principle}\index{Lefschetz's principle} says "there is but one algebraic geometry of characteristic $p$ for each value of $p$" \cite[page 306]{LefPrin}. We even have the stronger statement that roughly says "that algebraic geometry over an arbitrary algebraically closed field of characteristic $0$ is 'the same' as algebraic geometry over $\mathbb{C}$" \cite[\S VI.6]{SilArith}. J. Barwise and P. Eklof proved Lefschetz's Principle meta-mathematically in \cite{BarEkl}.

    Thus, doing algebraic geometry over $\overline{K}$ is 'the same' as doing algebraic geometry over $\mathbb{C}$. We then use the two comparison theorems from \cite[Theorem 20.5 and Theorem 21.1]{JMilLec} to switch from working with singular cohomology to  \'{e}tale cohomology and then working with the \'{e}tale cohomology over the special fiber:

    \begin{theorem*}[Theorem 21.1, Artin's Comparison Theorem]
        Let $X$ be a nonsingular variety over $\mathbb{C}$. For any finite abelian group $\Lambda$ and $r \ge 0, H^r_{\acute{e}t}(X, \Lambda) \cong H^r(X, \Lambda)$, where $H^r(X, \Lambda)$ is the classic singular cohomology.
    \end{theorem*}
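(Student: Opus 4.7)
The plan is to follow Artin's classical comparison argument by dévissage to good neighborhoods, inducting on the dimension of $X$. First I would reduce to the case $\Lambda = \mathbb{Z}/n\mathbb{Z}$ for some $n\geq 1$, using the structure theorem for finite abelian groups and the fact that both $H^r_{\acute{e}t}(X,-)$ and the singular $H^r(X,-)$ commute with finite direct sums. The comparison map itself arises from the morphism of sites $\epsilon : X^{an} \to X_{\acute{e}t}$ induced by the observation that every \'{e}tale morphism $U \to X$ is, on $\mathbb{C}$-points, a local analytic isomorphism. This produces a natural transformation $H^r_{\acute{e}t}(X,\Lambda) \to H^r(X^{an},\Lambda)$ which is functorial in $X$ and compatible with Mayer--Vietoris in both theories.

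Next I would set up the induction. Covering $X$ by finitely many affine opens and comparing the Mayer--Vietoris spectral sequences in \'{e}tale and classical cohomology reduces the claim to the case where $X$ is smooth affine. The key geometric input is Artin's theorem that any smooth variety is covered by Zariski opens which are \emph{good neighborhoods}, meaning iterated elementary fibrations: smooth morphisms $f : X \to Y$ whose fibers are smooth affine curves (a smooth projective curve minus a nonempty finite set of sections), with $Y$ itself a good neighborhood of strictly smaller dimension, terminating in a single smooth affine curve. This turns the problem into a tower of fibrations with very well-controlled fibers.

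The main step is then the inductive propagation. For an elementary fibration $f : X \to Y$ there are Leray spectral sequences
\[
    H^p(Y, R^q f_* \Lambda) \Longrightarrow H^{p+q}(X,\Lambda)
\]
on both the \'{e}tale and analytic sides. One checks that $R^q f_* \Lambda$ is locally constant constructible on $Y$ in each setting, and that the comparison map is an isomorphism on stalks $(R^q f_* \Lambda)_{\bar y}$ — computed by the cohomology of the fiber, a smooth affine curve. The base case of a smooth affine curve is verified by a direct computation: for a genus $g$ curve minus $s \geq 1$ points both cohomologies yield the same ranks (e.g.\ $\Lambda^{2g+s-1}$ in degree $1$ and $0$ in degree $2$), with the comparison map an isomorphism. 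Pushing this isomorphism up the tower via the Leray spectral sequences gives the result for arbitrary smooth $X$.

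The hard part will be establishing the local constancy and the isomorphism on stalks of $R^q f_* \Lambda$ in the \'{e}tale setting, which relies on the smooth and proper base change theorems for torsion sheaves of order invertible on the base (harmless here, since we are over $\mathbb{C}$), together with matching these base change statements against their topological counterparts under $\epsilon$. Establishing that Artin neighborhoods exist in sufficient generality — so that the entire induction can be carried out — is the other technical heart of the proof, and I would invoke Artin's original construction (generic projection to a smooth hypersurface and shrinking the base) rather than re-deriving it.
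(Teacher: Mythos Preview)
Your outline is a faithful sketch of Artin's classical comparison argument and is essentially correct as a proof strategy. However, you should be aware that the paper does not prove this statement at all: Theorem~21.1 is quoted verbatim from Milne's \emph{Lectures on \'Etale Cohomology} \cite[Theorem 21.1]{JMilLec} and is invoked purely as a black box to pass between singular and \'etale cohomology when applying the lift from characteristic $p$ to characteristic $0$. There is no argument to compare against; the paper's ``proof'' is simply the citation.

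That said, your sketch is the right one if a proof were required, and it matches the approach Milne himself indicates: reduction to $\Lambda = \mathbb{Z}/n\mathbb{Z}$, the comparison morphism of sites $\epsilon\colon X^{an}\to X_{\acute{e}t}$, d\'evissage via Artin's elementary fibrations (good neighborhoods), and the Leray spectral sequence together with proper/smooth base change to propagate the curve case upward. One small caution: in your base case for a smooth affine curve you should also note that the comparison map, not merely the abstract groups, is an isomorphism---this is what actually feeds into the spectral sequence comparison. Otherwise the plan is sound.
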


    \begin{theorem*}[Theorem 20.5]
        Suppose that a variety $X_0$ over an algebraically closed field $k$ of characteristic $p \neq 0$ can be lifted to a variety $X_1$ over a fixed field $K$ of characteristic zero. For any abelian group $\Lambda$,
        \[
            H^r(X_0, \Lambda) \cong H^r(X_{1, \overline{K}}, \Lambda),
        \]
        where $X_{1, \overline{K}}$ is the variety over the algebraic closure of $K$.
    \end{theorem*}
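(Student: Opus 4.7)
The plan is to invoke the proper base change theorem together with the smooth-and-proper base change theorem for \'{e}tale cohomology, applied to a lift $\mc{X} \to \mathrm{Spec}(W(k))$ of $X_0$. By hypothesis such a lift exists, giving a smooth proper morphism $f: \mc{X} \to \mathrm{Spec}(W(k))$ whose special fiber is $X_0$ and whose geometric generic fiber is $X_{1, \overline{K}}$. Since $W(k)$ is a complete (hence henselian) discrete valuation ring, the scheme $\mathrm{Spec}(W(k))$ has a unique closed point $\bar{0}$ and geometric generic point $\overline{\eta}$, and every locally constant \'{e}tale sheaf on it is already constant. This is the geometric setting in which the two comparison stalks will match up.

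Step one is proper base change: for $f$ proper and $\Lambda$ a finite torsion sheaf, the stalks of $R^r f_* \Lambda$ at geometric points of the base compute the \'{e}tale cohomology of the corresponding geometric fibers. Concretely,
\[
    (R^r f_* \Lambda)_{\bar{0}} \cong H^r_{\text{\'et}}(X_0, \Lambda) \qquad \text{and} \qquad (R^r f_* \Lambda)_{\overline{\eta}} \cong H^r_{\text{\'et}}(X_{1, \overline{K}}, \Lambda).
\]
It therefore suffices to exhibit a canonical isomorphism between these two stalks.

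Step two is the smooth-and-proper base change theorem: for $f$ smooth and proper and $\Lambda$ a finite abelian group of order invertible on the base (i.e.\ coprime to $p$), the higher direct images $R^r f_* \Lambda$ are locally constant constructible \'{e}tale sheaves on $\mathrm{Spec}(W(k))$. By the remark above, locally constant means constant here, so the specialization map identifies the stalks at $\bar{0}$ and $\overline{\eta}$. Composing with the identifications of step one yields the claimed isomorphism.

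The main obstacle I anticipate is the coefficient hypothesis: the clean form of both base change theorems requires $\Lambda$ to be a finite torsion sheaf whose order is prime to the residue characteristic $p$, so the phrase \emph{any abelian group} in the statement must be read as \emph{finite abelian group of order coprime to $p$}. One then recovers the $\mathbb{Z}_\ell$- and $\mathbb{Q}_\ell$-coefficient versions (for $\ell \neq p$) by passing to the inverse limit along $\Lambda = \mathbb{Z}/\ell^n \mathbb{Z}$ and tensoring with $\mathbb{Q}_\ell$. Once the coefficient condition is properly interpreted, the proof reduces entirely to the two named base change theorems, and the only remaining bookkeeping is checking that the specialization map intertwines the two pullback identifications of the fibers.
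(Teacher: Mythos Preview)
The paper does not actually prove this statement: it is quoted verbatim from Milne's lecture notes \cite[Theorem 20.5]{JMilLec} as one of the two comparison theorems that underpin the lifting strategy, and is used as a black box throughout. There is therefore no in-paper proof to compare your proposal against.

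That said, your sketch is the standard argument and matches what one finds in Milne: realize the lift as a smooth proper family over a henselian DVR, use proper base change to identify the stalks of $R^r f_* \Lambda$ with the cohomology of the geometric fibers, and use smooth-proper base change to conclude that $R^r f_* \Lambda$ is locally constant, hence constant on the spectrum of a henselian local ring, so the stalks agree. Your observation about the coefficient hypothesis is also correct and worth flagging: the theorem as stated says ``any abelian group $\Lambda$'' but the smooth-proper base change theorem requires $\Lambda$ finite with order invertible on the base, i.e.\ prime to $p$; the $\mathbb{Z}_\ell$ and $\mathbb{Q}_\ell$ versions then follow by the limit argument you indicate. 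This restriction is implicit in how the paper actually uses the result (always with $\ell$-adic coefficients for $\ell \neq p$).
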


    In the art of proving part (a) of a theorem on the Gysin kernel, we show that all pre-lemmas and propositions about the countability of varieties, regularity and representability of the Chow group $CH_0(X)_{\deg=0}$ of zero cycles of degree zero still hold true in our general setting. Then when we need to use cohomological arguments, we use the lift and comparison theorems on the previous results showed initially over $\mathbb{C}$.

    For part (b), we are highly inspired and follow closely \cite[\S 6]{BanGul} to show that the (\'{e}tale) fundamental group $\pi_1(U', \overline{\eta})$ based on the geometric generic point acts continuously on the cohomology groups that we will be working with. In particular, we will show that we can always work with a Lefschetz pencil and analyze the vanishing cycles which are irreducible (see \cite{Freitag}).
    
\section{A Brief History}
    This section goes over a very brief history of this problem and thus can be skipped without any consequence to the rest of the paper.

    It has been classically known that in the theory of algebraic curves there is a map $Sym^n(C) \to J(C)$ from the \textit{$n$-th symmetric product}\index{symmetric product} of the curve $C$ into the \textit{Jacobian Variety}\index{jacobian variety} where the fibers are projective spaces, i.e. representing the linear systems of degree $n$. As usual in mathematics, when adding on an extra dimension, i.e. for algebraic surfaces, the whole system becomes a lot more difficult. Naturally we have the analogous map $Sym^n(S) \to Alb(S)$ where $Alb(S)$ is the \textit{Albanese variety}\index{albanese variety}. The fibers are irreducible and regular when $n$ is large enough (see \cite{RuledSurf}). However, in the 1960's, a long open problem was if the fibers are rational. One of the first leading researchers on the problem was Severi \cite{Severi}. After Severi, Arthur Mattuck proved the following

    \begin{theorem*}
        Let $S$ be a complete non-singular surface in characteristic zero, and let $q = \dim(Alb(S))$. If for some $n > q$ the general fiber of the morphism $Sym^n(S) \to Alb(S)$ is a rational variety, then $V$ is a ruled surface.
    \end{theorem*}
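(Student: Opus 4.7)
The plan is to extract a rational curve through a general point of $S$ from the hypothesis on the Albanese fibers, and then conclude by the Enriques--Castelnuovo classification of surfaces. Setting $\alpha \colon \mathrm{Sym}^n(S) \to \mathrm{Alb}(S)$ for the Albanese sum map $(p_1,\dots,p_n)\mapsto \sum \mathrm{alb}(p_i)$, the generic fiber $F_a = \alpha^{-1}(a)$ has dimension $2n - q$. Since $n > q$, we have $\dim F_a \geq n + 1 \geq 2$, so by hypothesis $F_a$ is a rational variety of dimension at least two. Rational varieties of dimension $\geq 2$ are rationally chain connected, so any two general points of $F_a$ are joined by a chain of rational curves inside $F_a$.

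Next I would use the classical principle that any morphism $\mathbb{P}^1 \to \mathrm{Sym}^n(S)$ yields, via the universal family, a subscheme $Z \subset \mathbb{P}^1 \times S$ flat of degree $n$ over $\mathbb{P}^1$, whose fibers over any two points of $\mathbb{P}^1$ are rationally equivalent $0$-cycles on $S$. Applied to the rational curves supplied by the previous paragraph, this shows that each Albanese fiber $F_a$ is contained in a single rational equivalence class of $0$-cycles. Projecting $Z \to S$ produces curves in $S$; to see that one may choose these rational and covering a dense open of $S$, I would use that through every general point of $F_a$ passes a $(2n-q-1)$-dimensional family of rational curves, combined with the finite surjective addition map $\mathrm{Sym}^{n-1}(S) \times S \to \mathrm{Sym}^n(S)$. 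By a careful monodromy/Bertini argument one selects a rational curve $C \subset F_a$ whose preimage in $S \times \mathrm{Sym}^{n-1}(S)$ contains a rational component projecting non-constantly to $S$, and varying the initial configuration (namely, the base point through which $C$ is forced to pass) shows such rational curves cover a Zariski-dense open of $S$.

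Once $S$ is known to be uniruled, the characteristic-zero hypothesis invokes the Enriques--Castelnuovo classification: a smooth projective surface over a field of characteristic zero is uniruled if and only if its Kodaira dimension equals $-\infty$, in which case it is birational to $\mathbb{P}^1 \times C$ for some smooth curve $C$, i.e., ruled. Thus $S$ is a ruled surface, as claimed.

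The main obstacle is the middle step: upgrading a rational curve in $\mathrm{Sym}^n(S)$ to a rational curve in $S$ through a general point. The pullback of a rational curve in $\mathrm{Sym}^n(S)$ via the degree-$n$ addition map is only a finite cover of $\mathbb{P}^1$, whose components may a priori have positive genus; one must exploit the abundance of rational curves in $F_a$ (for instance, the infinitely many pencils of lines supplied by a birational identification $F_a \dashrightarrow \mathbb{P}^{2n-q}$) together with a genericity argument to select one whose lift contains a rational component dominating $S$ under the projection. This is where the rationality hypothesis, as opposed to a weaker unirationality hypothesis, is essential.
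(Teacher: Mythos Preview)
The paper does not actually prove this statement: its ``proof'' is the single line \emph{see \cite{RuledSurf}}, deferring entirely to Mattuck's original article. The paper does, however, tell you what Mattuck's method is---it remarks explicitly that ``in the proof of Mattuck's theorem, differential $2$-forms were also used.'' That route is quite different from yours: a nonzero holomorphic $2$-form $\omega$ on $S$ induces a symmetric $2$-form on $\mathrm{Sym}^n(S)$, and one argues that this induced form cannot vanish identically along the generic Albanese fiber, contradicting the rationality of that fiber; hence $p_g(S)=0$, and pushing the argument through the plurigenera yields $\kappa(S)=-\infty$. No lifting of curves is required.

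Your proposal, by contrast, has a genuine gap at precisely the step you yourself flag as ``the main obstacle.'' A rational curve $\mathbb{P}^1\to \mathrm{Sym}^n(S)$ gives a degree-$n$ branched cover $Z\to\mathbb{P}^1$ inside $\mathbb{P}^1\times S$, and there is no reason for any component of $Z$ to be rational: already for a hyperelliptic curve $C$ of genus $g\ge 2$, the $g^1_2$ is a $\mathbb{P}^1$ sitting in $\mathrm{Sym}^2(C)$ whose universal family is $C$ itself. Your appeal to ``a careful monodromy/Bertini argument'' to select a rational curve in $F_a$ whose lift has a rational component dominating $S$ is not an argument but a hope; nothing you have written explains why the rationality of $F_a$ (as opposed to, say, mere uniruledness of $F_a$, which would not suffice) forces such a curve to exist. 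Without this step you have not established that $S$ is uniruled, and the final invocation of the Enriques--Castelnuovo classification is therefore unjustified. If you want to pursue a geometric route rather than Mattuck's $2$-form argument, you will need a concrete mechanism---not a Bertini slogan---that converts rationality of the Albanese fiber into a moving rational curve on $S$.
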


    \begin{proof}
        see \cite{RuledSurf}
    \end{proof}

    Very similarly and independently, David Mumford in the article \cite{Mum} has the following

    \begin{theorem*}
        Let $S$ be a non-singular complex algebraic surface. For all $f:S \to Sym^n(S)$ such that all the $0$-cycles $f(s),\, s \in S$, are rationally equivalent, it follows that $\eta_f = 0$ where $\eta_f$ is a $2$-form in $\Gamma(S, \Omega_S^2)$.
    \end{theorem*}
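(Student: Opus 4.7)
The plan is to identify $\eta_f$ with the pullback $f^*\tilde\omega$ of a holomorphic 2-form $\tilde\omega$ on $\mathrm{Sym}^n(S)$ canonically induced from a 2-form $\omega\in\Gamma(S,\Omega_S^2)$, and then to show $f^*\tilde\omega=0$ by an induction on the ``depth'' of the rational-equivalence chain combined with the vanishing $H^0(\mathbb{P}^1,\Omega^i)=0$ for $i\geq 1$. Running this construction for arbitrary $\omega$ then yields the theorem as stated.

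First, I would construct $\tilde\omega$ as the descent to $\mathrm{Sym}^n(S)=S^n/\mathfrak{S}_n$ of the $\mathfrak{S}_n$-invariant holomorphic 2-form $\sum_{i=1}^n \pi_i^*\omega$ on $S^n$. It is regular on the smooth locus and, because $\mathrm{Sym}^n(S)$ has only finite-quotient singularities in codimension $\geq 2$, it extends to a global $\tilde\omega\in\Gamma(\mathrm{Sym}^n(S),\Omega^2)$ (for instance by pulling back to a resolution). Unwinding the definition of $\eta_f$ shows $f^*\tilde\omega=\eta_f$.

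The central step stratifies the rational-equivalence class $R=\{z\in\mathrm{Sym}^n(S) : z\sim_\mathrm{rat} f(s_0)\}$ by the minimal length $k$ of a connecting chain of rational equivalences, $R=\bigcup_{k\geq 0} W_k$, where each $W_k$ is a countable union of locally closed algebraic subvarieties parametrised by Chow schemes of rational curves on $\mathrm{Sym}^n(S)$. Uncountability of $\mathbb{C}$ together with irreducibility of $S$ forces $f(S)$, hence $\overline{f(S)}$, into some single $\overline{W_k}$. The inductive claim is that $\tilde\omega|_{W_k}=0$: each irreducible piece of $W_k$ is the image of a family $g:T\times\mathbb{P}^1\to\mathrm{Sym}^n(S)$ with $g(T\times\{\infty\})\subset W_{k-1}$, and Künneth for holomorphic 2-forms on $T\times\mathbb{P}^1$ combined with $H^0(\mathbb{P}^1,\Omega^1)=H^0(\mathbb{P}^1,\Omega^2)=0$ shows that $g^*\tilde\omega=p_1^*\alpha$ for some $\alpha\in\Gamma(T,\Omega^2)$; restricting to the $\infty$-slice and invoking the induction hypothesis forces $\alpha=0$. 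Pulling back along $f$ finally yields $\eta_f=f^*\tilde\omega=0$.

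The main obstacle I expect is making the stratification $R=\bigcup W_k$ genuinely algebraic and of bounded size, so that the uncountability argument can trap $f(S)$ inside a single $\overline{W_k}$: one must produce the auxiliary bases $T$ algebraically as moduli of chains of rational curves on the singular variety $\mathrm{Sym}^n(S)$, handle descent and pullback of holomorphic 2-forms across the quotient singularities (for example by resolving), and ensure Künneth applies by replacing $T$ by a smooth projective compactification and checking the argument is insensitive to this replacement.
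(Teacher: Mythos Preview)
The paper does not prove this theorem: it appears in the historical overview (Section~2) purely as a citation of Mumford's result \cite{Mum}, with no accompanying proof environment. There is therefore no ``paper's own proof'' to compare against.

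That said, your proposal is essentially a sketch of Mumford's original argument. The construction of the trace 2-form $\tilde\omega$ on $\mathrm{Sym}^n(S)$, the identification $\eta_f=f^*\tilde\omega$, the stratification of the rational-equivalence class as a countable union of constructible pieces indexed by chain length (this is Mumford's Lemma~3), the uncountability trick to trap $f(S)$ in one stratum, and the inductive K\"unneth argument using $H^0(\mathbb{P}^1,\Omega^i)=0$ are all in \cite{Mum}. Your anticipated obstacles (algebraicity of the $W_k$, handling quotient singularities, passing to smooth compactifications of the parameter bases) are real but are dealt with in Mumford's paper, largely by working on $S^n$ rather than $\mathrm{Sym}^n(S)$ and by exploiting the fact that holomorphic forms on smooth projective varieties are determined by their restriction to any Zariski-dense open. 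So the outline is sound and faithful to the source; it is simply not something this paper itself undertakes.
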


    As we can see in Mumford's theorem, differential $2$-forms were used. Similarly in the proof of Mattuck's theorem, differential $2$-forms were equally used. At a quick glance at this paper, we do not use differentials at all. In Mumford \cite{Mum}, lemma 3 states that $Sym^{n,n}(S)$ contains a countable set of closed subvarieties $Z_i$ such that if $(A, B) \in Sym^{n,n}(S)$, then $A \sim_{rat} B \iff (A, B) \in \bigcup_{i = 1}^\infty Z_i$. From this, A. A. Ro\u{i}tman produced two articles generalizing this subject to any uncountable algebraically closed field of characteristic $0$ (see \cite{RoitGamma} and \cite{RoitRat}). In particular, we have proposition $2$ from \cite{RoitRat},

    \begin{proposition*}
        Let $Alb(X)$ be the Albanese variety of a non-singular irreducible projective variety $X$ over an uncountable algebraically closed field of characteristic $0$. Then the canonical epimorphism $CH_0(X) \to Alb(X)$ induces an epimorphism on the points of finite order:
        \[
            CH_0(X)_{tor} \to Alb(X)_{tor} \to 0.
        \]
    \end{proposition*}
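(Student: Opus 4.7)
The plan is to combine the Albanese exact sequence with a divisibility argument for the Albanese kernel, exploiting the uncountability of $k$ in a standard fashion.

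First, I would observe that every torsion element of $CH_0(X)$ has degree zero (since the degree map lands in $\mathbb{Z}$), so the statement reduces to the surjectivity of $CH_0(X)_{\deg=0,tor} \to Alb(X)_{tor}$. Consider the Albanese short exact sequence
\[
    0 \to T(X) \to CH_0(X)_{\deg=0} \xrightarrow{alb_*} Alb(X) \to 0,
\]
where $T(X) := \ker(alb_*)$ is the Albanese kernel. Applying the snake lemma to multiplication by $n$ yields the six-term sequence
\[
    T(X)[n] \to CH_0(X)_{\deg=0}[n] \to Alb(X)[n] \xrightarrow{\partial_n} T(X)/nT(X) \to CH_0(X)/n \to Alb(X)/n \to 0.
\]
Surjectivity of the $n$-torsion map is equivalent to the vanishing of the connecting homomorphism $\partial_n$, and is in particular implied by the $n$-divisibility of $T(X)$. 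Passing to the union over all $n$ then delivers the statement.

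The crucial step is therefore to prove that $T(X)$ is divisible. The strategy is to represent any $[z] \in T(X)$ as a difference of effective cycles of a large common degree $N$, view these as points of $Sym^N(X)$, and invoke the Abel-Jacobi morphism $\sigma_N: Sym^N(X) \to Alb(X)$, which for $N \gg 0$ is surjective with irreducible general fiber. Because $Alb(X)$ is an abelian variety, it is divisible, so one can formally take an $n$-th root of the relevant Albanese class inside the image of $\sigma_N$. Mumford's countability lemma (\cite{Mum}, Lemma 3) asserts that the rational equivalence locus in $Sym^M(X) \times Sym^M(X)$ is a countable union of Zariski closed subvarieties $\bigcup_i Z_i$. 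Combining this countability statement with the uncountability of $k$ allows one to find a point outside any proper countable union inside a positive-dimensional fiber of $\sigma_N$, and careful bookkeeping produces $[w] \in T(X)$ with $n[w] = [z]$.

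The principal obstacle is precisely this divisibility of $T(X)$, which is the substantive content of \cite{RoitRat}. Uncountability of $k$ is used decisively here: the conclusion genuinely fails over countable fields such as $\overline{\mathbb{Q}}$. The technical difficulty is that rational equivalence is not itself an algebraic equivalence relation but only a countable union of such, so standard algebraic group arguments for a parameter space of $n$-th roots do not directly apply; Mumford's lemma bridges the gap, and the uncountability of $k$ is what turns the resulting countable-union obstruction into no obstruction at all.
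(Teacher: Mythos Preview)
The paper does not prove this proposition; it appears in Section~2 (``A Brief History'') purely as a citation of Proposition~2 from \cite{RoitRat}, with no argument given. So there is no ``paper's own proof'' to compare against.

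That said, your outline is a sound reconstruction of the standard route to Ro\u{i}tman's result. The reduction via the snake lemma to divisibility of the Albanese kernel $T(X)$ is correct, and you rightly identify that divisibility as the real content. Your sketch of how to obtain it---representing cycles by points of $Sym^N(X)$, using surjectivity and irreducibility of generic fibres of $Sym^N(X)\to Alb(X)$ for $N\gg 0$, and invoking Mumford's countability lemma together with uncountability of $k$ to dodge a countable union of proper closed subsets---is the genuine mechanism in \cite{RoitRat}. One point to be careful about: the phrase ``careful bookkeeping produces $[w]\in T(X)$ with $n[w]=[z]$'' hides essentially all of Ro\u{i}tman's work; the argument requires controlling not just a single fibre but a family of rational-equivalence constraints simultaneously, and the passage from ``generic fibre irreducible'' to an actual $n$-th root in $T(X)$ is where the countability/uncountability interplay must be made precise. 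As a high-level plan this is fine, but if you were asked to supply a self-contained proof, that step would need to be fully written out.
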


    Even though in his article he called it a proposition, this has become an extreme topic point in research and thus people call it Ro\u{i}tman's theorem. A decade later, Milne abstracted Ro\u{i}tman's theorem to work over fields of characteristic $p\neq 0$,

    \begin{theorem*}
        Let $k$ have characteristic $p \neq 0$, then $CH_0(X)(p) \to Alb(X)(p)$ is an isomorphism where $CH_0(X)(p)$ and $Alb(X)(p)$ are the $p$-primary components.
    \end{theorem*}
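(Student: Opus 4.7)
The plan is to bypass the topological methods used in characteristic zero (where Ro\u{i}tman's argument ultimately rests on comparisons between singular homology and Hodge-theoretic data) and instead attack the $p$-primary part via flat cohomology, which is the natural replacement for \'{e}tale cohomology at the residue characteristic. On the Chow side, Bloch's formula gives $CH_0(X) \cong H^d(X_{\text{Zar}}, \mc{K}_d^M)$ with $d = \dim X$, and the Bloch-Kato-Gabber theorem identifies the quotient $\mc{K}_d^M/p^n$ with the logarithmic de Rham-Witt sheaf $W_n\Omega^d_{X,\log}$. After passing to cohomology and taking the direct limit over $n$, the $p$-primary torsion of $CH_0(X)$ is thus computed by flat cohomology groups of these sheaves of differentials.

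For the Albanese side, the $p$-primary torsion $Alb(X)(p)$ is accessed through the Kummer sequence for multiplication by $p^n$ on $Alb(X)$, which, after taking flat cohomology, expresses $Alb(X)[p^n]$ in terms of $H^1_{fl}(X, \mu_{p^n})$ and a dual term. The Albanese morphism $CH_0(X)_{\deg=0} \to Alb(X)$ then induces on $p$-primary torsion a map that can be reinterpreted via flat duality in terms of cup products involving $W_n\Omega^\bullet_{X,\log}$, so that both source and target of the map under consideration live in the same flat-cohomological framework.

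The main obstacle, and the technical heart of the argument, is the compatibility step: one must verify that the cycle class map and the map induced by the Albanese morphism agree under the flat cohomological identifications above, and then invoke a Poincar\'{e}-type flat duality to conclude that the resulting map is an isomorphism. This requires careful tracking of how cycle classes land in the correct filtration pieces of the de Rham-Witt complex, together with control of the flat cohomological behavior of Milnor $K$-theory modulo $p^n$. Once these compatibilities are in place, the isomorphism $CH_0(X)(p) \cong Alb(X)(p)$ follows formally from duality, exactly paralleling how the prime-to-$p$ part of Ro\u{i}tman's theorem falls out of \'{e}tale duality in characteristic zero.
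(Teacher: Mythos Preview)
The paper does not prove this statement; it appears in the historical survey (Section~2) and the entire proof reads ``see \cite{MilRoit}''. So there is no argument in the paper to compare your proposal against.

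That said, your outline is broadly faithful to Milne's actual method in \cite{MilRoit}: he does work with flat cohomology and the logarithmic Hodge--Witt sheaves $\nu_n(r) = W_n\Omega^r_{X,\log}$, and the key input is a flat-duality pairing that plays the role of \'etale Poincar\'e duality at the characteristic. Two points of caution. First, your invocation of Bloch's formula $CH_0(X) \cong H^d(X_{\mathrm{Zar}}, \mc{K}^M_d)$ together with Bloch--Kato--Gabber is a somewhat anachronistic packaging: Milne's 1982 argument predates the general Bloch--Kato--Gabber isomorphism and proceeds more directly via the cycle map into flat cohomology and the structure of $H^{d+i}_{fl}(X, \nu_n(d))$, rather than through Milnor $K$-sheaves. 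Second, the part you flag as ``the technical heart'' --- the compatibility of the Albanese map with the flat-cohomological cycle class and the duality pairing --- is indeed where all the content lives, and your paragraph does not indicate how it is actually carried out (in Milne this rests on his earlier work on flat duality for surfaces and abelian varieties, and on explicit computations with the de~Rham--Witt complex). As a sketch of the strategy your proposal is sound; as a proof it remains a pointer to \cite{MilRoit}, which is exactly what the paper itself gives.
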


    \begin{proof}
        See \cite{MilRoit}.
    \end{proof}

    From here, Spencer Bloch started investigating the relation between algebraic cycles and higher $K$-theory, which he called higher Chow groups (see \cite{BloAlgK}). This has been a very successful route of research, but for what we focus on, we will not talk about $K$-theory at all. Outside of higher Chow groups, Bloch was able to generalize Milne's theorem,

    \begin{theorem*}
        Let $X$ be a smooth projective variety over an algebraically closed field $k$. Then the map $CH_0(X) \to Alb(X)$ induces an isomorphism on torsion prime to the characteristic of $k$.
    \end{theorem*}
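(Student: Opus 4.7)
The plan is to reduce to the case of a smooth projective surface via a Bertini/Lefschetz argument, and then handle surfaces using the étale cycle class map together with the Merkurjev-Suslin theorem, separately for each prime $\ell \neq p$.

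First, I would run a Lefschetz hyperplane induction on $\dim X$. For a sufficiently ample smooth divisor $Y \subset X$, the pushforward $CH_0(Y) \twoheadrightarrow CH_0(X)$ is surjective, and the induced $Alb(Y) \twoheadrightarrow Alb(X)$ is surjective with abelian (and in particular divisible) kernel, so that torsion prime to $p$ is preserved in a controlled way. A snake lemma diagram between $CH_0$ and $Alb$ on $Y$ and $X$ then reduces the isomorphism statement to the case where $X$ is a smooth projective surface.

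For a surface $X$ and a fixed prime $\ell \neq p$, I would use Bloch's formula $CH_0(X) \cong H^2(X, \mathcal{K}_2)$ together with the Gersten resolution of the Zariski sheaf $\mathcal{K}_2$. The Merkurjev-Suslin theorem asserts that the Galois symbol
\[
    K_2(F)/\ell^n \longrightarrow H^2_{\text{et}}(F, \mu_{\ell^n}^{\otimes 2})
\]
is an isomorphism for every field $F$ of characteristic prime to $\ell$; applied stalkwise to the Gersten complex and then sheafified, this identifies $\mathcal{K}_2/\ell^n$ with the étale sheaf $\mathcal{H}^2(\mu_{\ell^n}^{\otimes 2})$. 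Chasing the long exact cohomology sequence and comparing with the description of $Alb(X)[\ell^n]$ via Poincar\'{e} duality,
\[
    Alb(X)[\ell^n] \;\cong\; H^1_{\text{et}}(X,\mathbb{Z}/\ell^n(1))^\vee \;\cong\; H^3_{\text{et}}(X,\mathbb{Z}/\ell^n(2)),
\]
yields that the Abel-Jacobi map is an isomorphism on $\ell^n$-torsion. Passing to the limit over $n$ and assembling over all primes $\ell \neq p$ (using that $Alb(X)[N]$ has bounded order in each $N$, so only finitely many primes can contribute non-trivially in any given degree) produces the full prime-to-$p$ torsion isomorphism.

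The main obstacle is the Merkurjev-Suslin input: it is a deep result in algebraic K-theory, and checking that its stalkwise use in the Gersten resolution gives the required identification of $\mathcal{K}_2$-cohomology with \'{e}tale cohomology demands care. A secondary issue is the Lefschetz reduction in positive characteristic, where one must verify that the hyperplane section arguments carry over from the classical complex setting; this is precisely the kind of step where the \'{e}tale/lift formalism developed earlier in this paper (Theorems 20.5 and 21.1) becomes useful.
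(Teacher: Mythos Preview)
The paper does not give a proof of this statement; it simply cites \cite[Theorem~5.1]{SBLec}. Your outline for the surface case is essentially the standard $K$-theoretic argument: Bloch's formula $CH^2(X)\cong H^2_{Zar}(X,\mathcal K_2)$, the Merkurjev--Suslin identification of $\mathcal K_2/\ell^n$ with the Zariski sheaf $\mathcal H^2(\mu_{\ell^n}^{\otimes 2})$, and Bloch--Ogus comparison with \'{e}tale cohomology do produce $CH_0(X)_0[\ell^n]$ in terms of $H^3_{\acute{e}t}(X,\mu_{\ell^n}^{\otimes 2})\cong Alb(X)[\ell^n]$, and this part is correct in outline.

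The Lefschetz reduction, however, does not go through by a plain snake-lemma chase. For $\dim X\ge 3$ and a smooth ample $Y\subset X$ one has $Alb(Y)\xrightarrow{\sim}Alb(X)$ and $CH_0(Y)_0\twoheadrightarrow CH_0(X)_0$, so $T(X):=\ker(alb_X)$ is a quotient of $T(Y):=\ker(alb_Y)$. Knowing $T(Y)[\ell]=0$ makes $T(Y)$ uniquely $\ell$-divisible, but a quotient of a uniquely $\ell$-divisible group can acquire $\ell$-torsion (for instance $\mathbb Z[1/\ell]/\mathbb Z$), so one cannot conclude $T(X)[\ell]=0$ this way. Concretely: a torsion class in $CH_0(X)_0$ lifts to $CH_0(Y)_0$, but the lift has no reason to be torsion, and you have no control over the kernel of the pushforward on $CH_0$. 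Bloch's actual argument either works directly in arbitrary dimension, or one performs a more refined reduction (e.g.\ via correspondences from curves and decomposition of the diagonal); either way this step needs a genuine additional idea beyond what you have written. A minor side remark: the comparison theorems 20.5 and 21.1 you invoke at the end compare \'{e}tale with singular cohomology and special with generic fibres; they are not what establishes weak Lefschetz in positive characteristic, which is a separate standard \'{e}tale result.
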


    \begin{proof}
        See \cite[Theorem 5.1]{SBloch}.
    \end{proof}

    Except for the few generalizations, most of these results are over $\mathbb{C}$. Because of this, many people started using Hodge theory to obtain results, in particular, studying the monodromy action (see \cite{CVoi2}). Since the method of Hodge theory only works over the complex numbers, in their recent paper \cite{BanGul}, K. Banerjee and V. Guletski\u{i} changed the approach of the problem by using \'{E}tale theory. But on this note, they still worked over an uncountable algebraically closed field of characteristic $0$. They do this in order to 
    make three assumptions. After this article, in \cite{PauScho} the authors go back to working over $\mathbb{C}$ and prove in detail that the three assumptions from Banerjee and Guletski\u{i} are true in their case, then ending their paper with a theorem on the Gysin kernel.

\section{Relation between algebraic, rational and homological equivalence}\label{relation}
    Let $X$ be a smooth projective reduced scheme of dimension $n$ over an algebraically closed field $k$ of arbitrary characteristic. For the definitions and proofs of the lemmas below see \cite[\S 2]{PauScho}.
    
    \subsection{Rational, Algebraic and Homological Equivalence of \texorpdfstring{$0$}{Lg}-cycles}

        Recall that the \textit{cycle map} is a homomorphism of graded groups which doubles degrees, $cl_{X}:Z^{*}(X) \rightarrow H^{*}(X), Z \mapsto i_{*}(1_{Z})$, where $i_{*}$ is the \textit{Gysin map} $H^{0}(Z, \Lambda) \rightarrow H^{2r}(X,\Lambda(r))$ and $1_{Z} \in H^{0}(Z,\Lambda)$ is the identity element of the ring $H^{*}(Z)$ (see \cite[Proposition 6.5]{JMilEtal}).
    
        \begin{lemma}\label{lem2.44}
            Assume in addition that $X$ is connected and let $Z_0(X)_{\deg=0} \subset Z_0(X)$ be the group of $0$-cycles of degree $0$. Then $Z_0(X)_{\hom} = Z_0(X)_{\deg = 0}$.
        \end{lemma}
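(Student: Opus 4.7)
The plan is to identify the cycle class map $cl_X\colon Z_0(X) \to H_{\acute{e}t}^{2n}(X, \Lambda(n))$ (with $n = \dim X$) with the degree map $Z_0(X) \to \Lambda$ under the trace isomorphism on top \'{e}tale cohomology. Once this identification is made, the kernel of $cl_X$ is manifestly $Z_0(X)_{\deg=0}$, which is exactly the content of the lemma.

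First I would invoke Poincar\'{e} duality: since $X$ is smooth, projective, and connected of pure dimension $n$ over the algebraically closed field $k$, there is a canonical trace isomorphism $\mathrm{tr}_X\colon H_{\acute{e}t}^{2n}(X, \Lambda(n)) \xrightarrow{\sim} \Lambda$ (see \cite[Chapter VI, \S 11]{JMilEtal}). Connectedness of $X$ is what guarantees that the top cohomology is of rank one.

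Next I would compute $cl_X(P)$ on a single closed point $P \in X$. Because $k$ is algebraically closed, $k(P) = k$, so $(P, X)$ is a smooth pair of codimension $n$, and the definition of the cycle map from \S 4 gives $cl_X(P) = i_*(1_P) = s_{P/X}$, the fundamental class of $P$ in $X$. The trace is normalized so that $\mathrm{tr}_X(s_{P/X}) = 1$, and by $\Lambda$-linearity of the cycle map this yields the identity $\mathrm{tr}_X \circ cl_X = \deg$ on all of $Z_0(X)$. The two subgroups in the lemma are then the kernels of the same map, hence equal.

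The main obstacle is really bookkeeping rather than substance: Section~4 defines the cycle map with torsion coefficients $\Lambda = \mathbb{Z}/n\mathbb{Z}$, which on their own only detect the degree \emph{modulo} $n$. To match the definition of homological equivalence one should pass to the $\ell$-adic limit with $\ell$ coprime to $\mathrm{char}(k)$, obtaining $H_{\acute{e}t}^{2n}(X, \mathbb{Z}_\ell(n)) \cong \mathbb{Z}_\ell$ (or tensoring further to $\mathbb{Q}_\ell$). Since $\mathbb{Z}_\ell$ is torsion-free, $\ker(\deg) = Z_0(X)_{\deg=0}$, and the compatibility of the fundamental-class and trace normalizations along the inverse system is then the only thing left to check before the argument concludes.
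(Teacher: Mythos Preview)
Your proposal is correct and follows essentially the same route as the paper: both arguments identify the top \'{e}tale cohomology of the connected $X$ with the $\ell$-adic integers and then observe that under this identification the cycle class map becomes the degree map, so the two kernels coincide. Your version is in fact more careful than the paper's about the trace normalization and the passage from finite to $\ell$-adic coefficients, but the underlying idea is the same.
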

        
        \begin{proof}
            Since $X$ is connected, we have $H_{\acute{e}t}^{2n}(X, \mathbb{Z}_p) = \mathbb{Z}_p(-n)$ (see \cite[Chapter 3 Proposition 4.5]{Freitag}). We know that for any sheaf $F$ of $\mathbb{Z}/p\mathbb{Z}$-modules  the $r$-th Tate twist is $F(r) = F \otimes \mathbb{Z}/p\mathbb{Z}(r)$ and thus $F(r)$ is locally (non-canonically) isomorphic to $F$ (see \cite[page 163]{JMilEtal}). If $X$ is a variety over a separably closed field, then $F(r) \cong F$ non-canonically. Taking $F = \mathbb{Z}/p\mathbb{Z}$, we get that $\mathbb{Z}/p\mathbb{Z}(r) \cong \mathbb{Z}/p\mathbb{Z}$. Taking the limit, we get $\mathbb{Z}_p(r) \cong \mathbb{Z}_p$. Then $Z_0(X)_{\hom} = \ker(cl:Z_0(X) \to H_{\acute{e}t}^{2n}(X, \mathbb{Z}_p)) = \ker(cl:Z_0(X) \to \mathbb{Z}_p(-n)) = \ker(cl:Z_0(X) \to \mathbb{Z}_p ) = \ker(\deg:Z_0(X) \to \mathbb{Z}) = Z_0(X)_{\deg=0}$.
        \end{proof}
        
        \begin{lemma}\label{lem2.45}
            If $C$ is an integral (reduced and irreducible) smooth curve and $P_1, P_2 \in C$ are two points in $C$, then $P_1 \sim_{\text{alg}} P_2$.
        \end{lemma}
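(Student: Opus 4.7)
The plan is to exhibit a one-parameter family of $0$-cycles on $C$ that specializes to $P_1$ at one moment and to $P_2$ at another, which is exactly what algebraic equivalence of $0$-cycles demands. The natural candidate is the diagonal.

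More precisely, I would take the parameter scheme $T := C$. Since $C$ is integral and smooth, $T$ is a smooth connected (irreducible) curve, so it is admissible as a base for algebraic equivalence. Consider the diagonal cycle $\Delta \subset C \times C = T \times C$. Via the first projection $\mathrm{pr}_1 \colon \Delta \to T$, $\Delta$ is isomorphic to $T$ and in particular is flat of relative dimension $0$ over $T$. For any $t \in T$, the fiber of $\Delta \to T$ over $t$ is the single closed point $(t,t)$, which, viewed as a $0$-cycle on the fiber $\{t\} \times C \cong C$, is just the point $t$. Taking $t = P_1$ and $t = P_2$ we get the cycles $P_1$ and $P_2$ respectively. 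By the standard definition of algebraic equivalence via flat families over a smooth connected curve (see \cite[\S 10.3]{FulInt}), this shows $P_1 \sim_{\mathrm{alg}} P_2$.

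There is essentially no obstacle here: the only things to verify are that $T = C$ is an admissible base (which holds because $C$ is smooth and integral, hence smooth and connected) and that the diagonal is a flat family of $0$-cycles (which is immediate since $\mathrm{pr}_1|_\Delta$ is an isomorphism). No characteristic-$p$ input or cohomological machinery is needed; the argument is identical to the one used over $\mathbb{C}$, so the statement transfers to our setting without any modification.
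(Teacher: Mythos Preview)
Your proposal is correct and matches the paper's proof essentially verbatim: the paper also takes $D=C$, $W=\Delta\subset C\times C$, and $t_1=P_1$, $t_2=P_2$. The only difference is that you spell out the flatness of $\mathrm{pr}_1|_\Delta$ explicitly, which the paper leaves implicit.
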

        
        \begin{lemma}\label{lem2.46}
            If there exists a connected curve $C$ such that its components are smooth and integral and $P$ and $Q$ are two points of $C$, then $P \sim_{\text{alg}} Q$.
        \end{lemma}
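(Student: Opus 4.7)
The plan is to reduce the statement to iterated applications of \autoref{lem2.45} via a traversal of the dual graph of components of $C$. Let $C_1, \ldots, C_m$ denote the irreducible components of $C$; by hypothesis each $C_i$ is smooth and integral, and since $C$ is connected the dual graph $\Gamma$ (vertices the components, with an edge between $C_i$ and $C_j$ whenever $C_i \cap C_j \neq \emptyset$) is connected. Given $P$ lying on some component $C_a$ and $Q$ on some $C_b$, I choose a path $C_{i_0} = C_a, C_{i_1}, \ldots, C_{i_r} = C_b$ in $\Gamma$ and, for each $0 \le j \le r-1$, a closed point $R_j \in C_{i_j} \cap C_{i_{j+1}}$. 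Setting $R_{-1} := P$ and $R_r := Q$, the goal is reduced to showing $R_{j-1} \sim_{\text{alg}} R_j$ on $C$ for every $j = 0, 1, \ldots, r$.

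For a fixed $j$, both points $R_{j-1}$ and $R_j$ lie on the smooth integral component $C' := C_{i_j}$. \autoref{lem2.45} applied to $C'$ furnishes a smooth curve $D$, a cycle $W \in Z^1(C' \times D)$ all of whose components dominate $D$, and points $t_{j-1}, t_j \in D$ with $W(t_{j-1}) = R_{j-1}$ and $W(t_j) = R_j$ as $0$-cycles on $C'$. Pushing $W$ forward along the proper closed immersion $\iota \times \mathrm{id}_D \colon C' \times D \hookrightarrow C \times D$ gives a cycle $W'$ on $C \times D$ whose components still dominate $D$, and since $\iota$ sends closed points to closed points one has $W'(t_{j-1}) = R_{j-1}$ and $W'(t_j) = R_j$ now viewed as $0$-cycles on $C$ (see \cite[\S 1.4]{FulInt} for the compatibility of pushforward with specialization over the base). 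Thus $R_{j-1} \sim_{\text{alg}} R_j$ on $C$.

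Chaining these equivalences by the transitivity of $\sim_{\text{alg}}$ yields $P = R_{-1} \sim_{\text{alg}} R_0 \sim_{\text{alg}} \cdots \sim_{\text{alg}} R_r = Q$ on $C$, proving the lemma. The only mildly subtle point, and really the main obstacle to record carefully, is checking that pushforward along $\iota \times \mathrm{id}_D$ preserves the family structure and interacts correctly with fiberwise specialization over $D$; this amounts to the standard compatibility of proper pushforward of cycles with flat base change along $Spec(k(t)) \to D$. No cohomological input or characteristic-dependent argument is required, so the argument transfers verbatim to our uncountable algebraically closed field $k$ of positive characteristic.
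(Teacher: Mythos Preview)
Your proof is correct and follows essentially the same route as the paper's: both arguments traverse a chain of components linking $P$ to $Q$, pick intersection points $R_j \in C_{i_j}\cap C_{i_{j+1}}$, apply \autoref{lem2.45} on each smooth integral component, and conclude by transitivity. The paper simply asserts ``without loss of generality $P=P_0\in C_1$, $Q=P_r\in C_r$, $P_i\in C_i\cap C_{i+1}$'' and invokes \autoref{lem2.45}, whereas you make the dual-graph path explicit and spell out the pushforward along $\iota\times\mathrm{id}_D$ needed to promote the family on $C'\times D$ to one on $C\times D$; this is a detail the paper leaves implicit.
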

        
        \begin{proposition}\label{prop2.47}
            If $X$ is connected, then $Z_0(X)_{\hom} = Z_0(X)_{\text{alg}}$.
        \end{proposition}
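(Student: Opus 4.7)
My plan is to deduce this from the three earlier results: the inclusion $Z_0(X)_{\text{alg}} \subset Z_0(X)_{\hom}$ is immediate from \autoref{prop2.43}, so I focus on the reverse inclusion. By \autoref{lem2.44}, an element of $Z_0(X)_{\hom}$ is precisely a degree-zero $0$-cycle $\alpha = \sum_i n_i P_i$ with $\sum_i n_i = 0$. Fixing any base point $Q \in X$, I rewrite $\alpha = \sum_i n_i (P_i - Q)$, which reduces the problem, by linearity, to showing $P \sim_{\text{alg}} Q$ for an arbitrary pair of closed points $P, Q$ of $X$.

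To establish this, I exploit the fact that $X$ is smooth and connected, hence irreducible, and that $k$ is algebraically closed and infinite (in fact uncountable). Fix a projective embedding $X \hookrightarrow \mathbb{P}^N$. If $\dim X = 1$ the conclusion is immediate from \autoref{lem2.45}, so assume $\dim X \geq 2$. I then plan to produce an irreducible curve $C \subset X$ containing both $P$ and $Q$ by iteratively intersecting $X$ with general hyperplanes passing through $P$ and $Q$, invoking Bertini's theorems for irreducibility and generic smoothness — both valid in positive characteristic over an infinite algebraically closed field. Passing to the normalization $\nu : \tilde C \to C \subset X$ yields a smooth integral projective curve with a finite morphism to $X$ whose image contains $P$ and $Q$. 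Picking preimages $\tilde P, \tilde Q \in \tilde C$, \autoref{lem2.45} gives $\tilde P \sim_{\text{alg}} \tilde Q$ on $\tilde C$ via the diagonal family, and pushing this family forward along $\nu$ (using compatibility of algebraic equivalence with proper pushforward) produces $P \sim_{\text{alg}} Q$ on $X$. As an alternative finish, one could instead build a connected chain of smooth integral curves inside $X$ between $P$ and $Q$ and invoke \autoref{lem2.46} directly.

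The main obstacle lies entirely in the Bertini construction. Requiring the hyperplanes to pass through the two prescribed points restricts the linear system by codimension $2$, after which Bertini only guarantees smoothness and irreducibility of the resulting section away from the base locus $\{P, Q\}$. Consequently the curve $C$ may be singular at $P$ or $Q$, which is precisely why I pass to the normalization: \autoref{lem2.45} is then applied on a genuinely smooth integral curve, and the singularities of $C$ at the distinguished points become immaterial. I expect no need for the lifting-and-comparison machinery of \autoref{fact3} here, since the argument is purely a combination of Bertini geometry and the cycle-theoretic lemmas already at hand, and therefore transfers transparently from characteristic zero to positive characteristic.
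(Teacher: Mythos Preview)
Your proposal is correct and follows essentially the same route as the paper: reduce via \autoref{prop2.43} and \autoref{lem2.44} to showing $P\sim_{\text{alg}}Q$ for arbitrary closed points, rewrite $\sum n_iP_i=\sum n_i(P_i-Q)$, and then connect $P$ and $Q$ by a curve inside $X$. The only difference is in how the curve is produced: the paper simply cites \cite[\S 8.2.1]{CVoi2} for the existence of a smooth connected curve through $P$ and $Q$ and then invokes \autoref{lem2.46}, whereas you give an explicit Bertini-with-base-points construction, anticipate the possible singularities at $P,Q$, and repair them by passing to the normalization before applying \autoref{lem2.45} and pushing forward. Your version is arguably more self-contained in positive characteristic (the cited reference is a complex Hodge-theory text), and your alternative finish via a chain of smooth integral curves and \autoref{lem2.46} is exactly the paper's endgame.
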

        
        \begin{proposition}\label{prop2.48}
            Let $X$ be a smooth projective variety of dimension $n$ over an algebraically closed field $k$ of arbitrary characteristic. Then $CH_0(X)_{\deg=0} = CH_0(X)_{\hom} = A_0(X)$.
        \end{proposition}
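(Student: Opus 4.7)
The plan is to derive this proposition almost immediately from the three results \autoref{prop2.43}, \autoref{lem2.44}, and \autoref{prop2.47} that were just established, by passing from cycle groups to Chow groups (quotients by rational equivalence).

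First I would note that since $X$ is a smooth projective variety, it is in particular connected (a variety is irreducible, hence connected), so the connectedness hypotheses of \autoref{lem2.44} and \autoref{prop2.47} are satisfied. If one wishes to be fully careful about whether ``variety'' is taken to mean irreducible or just reduced, one can decompose $X$ into its connected components and apply the results componentwise, since all three equivalence relations are defined componentwise on $0$-cycles.

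Next I would assemble the chain of equalities at the level of cycle groups: by \autoref{lem2.44} we have $Z_0(X)_{\deg = 0} = Z_0(X)_{\hom}$, and by \autoref{prop2.47} we have $Z_0(X)_{\hom} = Z_0(X)_{\text{alg}}$. Putting these together yields
\[
Z_0(X)_{\deg=0} \;=\; Z_0(X)_{\hom} \;=\; Z_0(X)_{\text{alg}}.
\]
Finally, by \autoref{prop2.43} we have $Z_0(X)_{rat} \subset Z_0(X)_{\text{alg}} \subset Z_0(X)_{\hom} \subset Z_0(X)_{\deg=0}$, so $Z_0(X)_{rat}$ is a common subgroup of all three groups appearing above. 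Quotienting the chain of equalities by $Z_0(X)_{rat}$ gives
\[
CH_0(X)_{\deg=0} \;=\; CH_0(X)_{\hom} \;=\; A_0(X),
\]
which is exactly the statement.

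There is no real obstacle here; this proposition is essentially a packaging of the preceding lemmas. The only thing to watch is that the characteristic-independent inputs genuinely hold in positive characteristic: \autoref{lem2.44} uses the $\ell$-adic (or rather $p$-adic, as written) cycle class map, and \autoref{prop2.47} uses Voisin's argument that any two points of $X$ lie on a connected chain of smooth curves, but both have been established earlier in the excerpt over an arbitrary algebraically closed $k$, so no further work is required.
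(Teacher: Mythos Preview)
Your proof is correct and follows essentially the same approach as the paper: invoke \autoref{lem2.44} for the first equality and \autoref{prop2.47} for the second, then pass to quotients by rational equivalence. You are slightly more explicit than the paper about the connectedness hypothesis and the quotienting step via \autoref{prop2.43}, but the argument is the same.
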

        
        \begin{lemma}
            Let $C$ be a smooth projective curve over an algebraically closed field $k$ of arbitrary characteristic. Then $CH_0(X)_{\deg=0} = CH_0(C)_{\hom} = A_0(C)$.
        \end{lemma}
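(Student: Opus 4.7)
The plan is to observe that this lemma is essentially an immediate specialization of \autoref{prop2.48} to the one-dimensional case. A smooth projective curve $C$ is a smooth projective variety of dimension $n=1$ over $k$, so taking $X=C$ in \autoref{prop2.48} directly yields the chain of equalities $CH_0(C)_{\deg=0} = CH_0(C)_{\hom} = A_0(C)$.

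If I wanted to spell out a self-contained argument rather than invoking \autoref{prop2.48} as a black box, I would proceed in three short steps. First, assuming $C$ is connected (which is implicit in ``smooth projective curve'' under the usual convention that a curve is integral), I would apply \autoref{lem2.44} with $n=1$, using $H^{2}_{\acute{e}t}(C,\mathbb{Z}_p)\cong \mathbb{Z}_p(-1)\cong \mathbb{Z}_p$, to conclude $Z_0(C)_{\hom}=Z_0(C)_{\deg=0}$. Second, I would apply \autoref{prop2.47} to obtain $Z_0(C)_{\hom}=Z_0(C)_{\text{alg}}$; note that in the one-dimensional case the appeal to \cite[\S 8.2.1]{CVoi2} for the existence of a smooth connected curve containing two chosen points is trivial, since one may take the curve to be $C$ itself, and then \autoref{lem2.45} (with $D=C$, $W=\Delta\subset C\times C$) gives $P\sim_{\text{alg}} Q$ directly. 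Third, I would pass both equalities to rational equivalence classes to obtain the statement for the Chow group.

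I do not foresee a genuine obstacle: every ingredient has already been set up in the preceding subsection, and the one-dimensional case is if anything simpler than the general case handled by \autoref{prop2.48}. The only point deserving mention is that the statement of the lemma presumes $C$ is connected (equivalently irreducible, since $C$ is smooth), which is the standing convention for ``curve'' in this paper and which is precisely the hypothesis needed to invoke \autoref{lem2.44} and \autoref{prop2.47}. I would also correct the apparent typo ``$CH_0(X)_{\deg=0}$'' in the statement to ``$CH_0(C)_{\deg=0}$'' so that all three terms refer to the same object.
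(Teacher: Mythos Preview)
Your proposal is correct and matches the paper's proof exactly: the paper simply says ``Apply \autoref{prop2.48} when $\dim(X) = 1$.'' Your additional remarks about the connectedness hypothesis and the typo in the statement are accurate and helpful, but the core argument is identical.
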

        
\section{The Lift and Specialization map}

    \subsection{Lift via local fields}
    \label{sec:LF}
    
        Recall that the formal power series (and thus the formal Laurent series) $\mathbb{F}_{p}[[x]]$ of the finite field $\mathbb{F}_{p}$ of $p$ elements in one indeterminate is uncountable. To ensure that our field is algebraically closed, we artificially take the algebraic closure, i.e. $\overline{\mathbb{F}_p((x))}$. Thus, $\overline{\mathbb{F}_p((x))}$ is an uncountable algebraically closed field of characteristic $p$. Naturally we get an infinite set of fields by having more variables, i.e. $\overline{\mathbb{F}_p((x_1, ..., x_n))}$.

        We know via the study of local fields that we always have a lift from characteristic $p$ to characteristic $0$ when we are working over an uncountable algebraically closed field $k$ of positive characteristic. This is given by the theorems below whose  proofs can be found in 
        \cite{SeLocal}.
    
        \begin{theorem}(Theorem 3)
            For every perfect field $k$ of characteristic $p$ there exists a complete discrete valuation ring and only one (up to unique isomorphism) which is unramified and has $k$ as its residue field.
        \end{theorem}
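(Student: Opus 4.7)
The plan is to split the statement into existence and uniqueness, and to carry both out through the Witt vector construction together with the Teichm\"uller lift, which are the natural tools when $k$ is perfect of characteristic $p$.

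For existence, I would construct $W(k)$ explicitly. Define the Witt polynomials $W_n(X_0,\dots,X_n) = \sum_{i=0}^{n} p^{i} X_i^{p^{n-i}}$ and use them to endow the set $k^{\mathbb{N}}$ with addition and multiplication turning it into a ring; denote this ring $W(k)$. The fact that these universal polynomials produce a well-defined ring structure uses only that $k$ is a commutative ring (and becomes particularly clean when $k$ is perfect). One then checks that $W(k)$ is complete for the topology defined by the ideals $p^n W(k)$, that its residue field $W(k)/pW(k)$ is naturally isomorphic to $k$, and that $p$ is a uniformizer. Since $p$ generates the maximal ideal, the ramification index is $1$, so $W(k)$ is unramified; and since $W(k)$ contains $\mathbb{Z}$ with $p \neq 0$, it has characteristic zero. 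This already supplies a CDVR with the required properties.

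For uniqueness, let $R$ be any unramified CDVR of characteristic zero with residue field $k$. Since $k$ is perfect, Frobenius is bijective on $k$, and for each $a \in k$ the sequence $\widetilde{a^{1/p^n}}^{\,p^n}$, where the tilde denotes any lift to $R$, is Cauchy in the $p$-adic topology and independent of the chosen lifts. This defines the Teichm\"uller map $\tau_R : k \to R$, which is multiplicative and satisfies $\tau_R(a) \equiv a \pmod{p}$. Completeness of $R$ together with the fact that $p$ is a uniformizer then shows that every element of $R$ has a unique expansion $\sum_{i \ge 0} \tau_R(a_i)\, p^i$ with $a_i \in k$. The identifications via $\tau_{W(k)}$ and $\tau_R$ therefore induce a bijection $W(k) \to R$ lifting the identity on $k$; and because the Witt polynomials were designed precisely so that addition and multiplication in these canonical expansions are encoded by universal polynomial operations on the coefficients, this bijection is a ring isomorphism. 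Applying the same reasoning to an automorphism of $W(k)$ that lifts the identity on $k$ forces it to be the identity, yielding uniqueness up to unique isomorphism.

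The main obstacle is the compatibility step at the end: one must verify that the addition and multiplication on $R$, when read off through the Teichm\"uller expansion, are governed by exactly the same universal polynomials used to define $W(k)$. This is the content of the classical Witt addition and multiplication formulas, and it is where perfectness of $k$ is essential, since the construction implicitly uses $p^n$-th roots in $k$. I would simply appeal to the detailed treatment in \cite[Chapter II \S 4--5]{SeLocal}, which carries out both the Witt vector construction and the uniqueness argument in exactly this form.
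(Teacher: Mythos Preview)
Your proposal is correct and aligns with the paper's approach: the paper's own proof is simply a citation to \cite[Theorem 3]{SeLocal}, and your sketch is precisely the Witt vector plus Teichm\"uller lift argument carried out in Serre's Chapter II \S\S4--5, which you also cite. You have merely unpacked what the paper left as a reference.
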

    
        We denote the completed discrete valuation ring (CDVR) in the above theorem as $W(k)$. It 
        can be explicitly constructed and is called the \textit{ring of Witt vectors}\index{Witt Vectors}\index{ring of Witt vectors} (see \cite[\S 6]{SeLocal}). We know that the ring of Witt vectors is the correct CDVR since we have the theorem
    
        \begin{theorem}(Theorem 8)
            If $k$ is a perfect ring of characteristic $p$, $W(k)$ is a strict $p$-ring with residue ring $k$.
        \end{theorem}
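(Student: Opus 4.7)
My plan is to use the explicit construction of $W(k)$ as the ring of Witt vectors $(a_0, a_1, a_2, \ldots)$ with $a_i \in k$, whose addition and multiplication are given by the universal Witt polynomials, and verify the three defining axioms of a strict $p$-ring directly: (i) $p$-adic separated completeness, (ii) $p$ is not a zero divisor, and (iii) $W(k)/pW(k) \cong k$ with $k$ perfect of characteristic $p$.

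First, I would recall the Verschiebung operator $V(a_0, a_1, \ldots) = (0, a_0, a_1, \ldots)$ and the Frobenius $F(a_0, a_1, \ldots) = (a_0^p, a_1^p, \ldots)$, and derive the key identity
\[
p \cdot x = V(F(x))
\]
for every $x \in W(k)$, which is the standard computation once one unwinds the Witt polynomial $\Phi_n(X_0,\ldots,X_n) = X_0^{p^n} + p X_1^{p^{n-1}} + \cdots + p^n X_n$. From this formula, multiplication by $p^n$ has the effect of shifting $n$ slots to the right and raising each component to the $p^n$-th power. Since $k$ is perfect, Frobenius is a bijection on $k$, so $V \circ F$ is injective on $W(k)$, which gives that $p$ is not a zero divisor. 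The same formula shows $p^n W(k) = V^n(W(k))$ consists exactly of those Witt vectors whose first $n$ coordinates vanish, so the natural surjection $W(k) \twoheadrightarrow k$ sending $(a_0, a_1, \ldots) \mapsto a_0$ has kernel $pW(k)$, giving $W(k)/pW(k) \cong k$.

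Next, the $p$-adic topology on $W(k)$ coincides with the topology defined by the family $\{V^n W(k)\}_{n \ge 0}$, i.e.\ with the inverse-limit topology on $W(k) = \varprojlim_n W_n(k)$ where $W_n(k)$ is the truncated ring of length-$n$ Witt vectors. Completeness and separation are then immediate from the construction of Witt vectors as an inverse limit, and one checks that $W(k)$ is indeed a CDVR with uniformizer $p$ since every nonzero element can be written as $p^n u$ with $u$ a unit (first nonzero coordinate). This simultaneously recovers the identification with the unramified CDVR of residue field $k$ from the preceding theorem, making the match between the abstract $W(k)$ and the Witt-vector construction explicit.

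The main obstacle is really the formula $p \cdot x = V F(x)$: it requires a careful bookkeeping argument using the ghost-component map $x \mapsto (\Phi_0(x), \Phi_1(x), \ldots)$, which is a ring homomorphism from $W(k)$ to $k^{\mathbb{N}}$ (after inverting $p$, or via the universal property that makes everything integral). Rather than re-derive the universal Witt polynomials, I would refer to \cite[\S 6]{SeLocal} for this computation and then emphasize that perfectness of $k$ is used precisely at the step where one concludes $F$ (and hence $VF$) is injective; this is what makes $W(k)$ a strict $p$-ring rather than merely a $p$-ring.
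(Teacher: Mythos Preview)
Your sketch is mathematically correct and is in fact the standard argument one finds in Serre's \emph{Local Fields}. However, the paper does not give its own proof of this statement at all: it simply writes ``see \cite[Theorem 8]{SeLocal}'' and moves on. So there is nothing to compare at the level of argument---you have supplied the content that the paper outsources to the reference.

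One small remark: your plan emphasizes that perfectness of $k$ is used to make $F$ (hence $VF$) injective, which is exactly right for the ``$p$ is not a zero divisor'' axiom. It is worth also noting that perfectness is used a second time, namely to get the equality $p^nW(k)=V^nW(k)$ (rather than merely an inclusion), since $V^n$ hits all of $V^nW(k)$ only because $F^n$ is surjective on $k$. You implicitly use this when you identify the $p$-adic filtration with the $V$-filtration to conclude completeness; making that explicit would tighten the write-up.
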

    
        Let $\mc{S}$ be a smooth projective polarized flat family of smooth projective connected surfaces over $W(k)$. Then we get the following commutative diagram.
        
        \[
            \begin{tikzcd}
                C_{t_0} \arrow[r, hook] \arrow[d, "r_{t_0}"] & \mc{C} \arrow[d] & C_{t_\eta} \arrow[l, hook] \arrow[d, "r_{t_\eta}"] & C_{t_{\overline{\eta}}} \arrow[l] \arrow[d, "r_{t_{\overline{\eta}}}"] \\
                {S}_0 \arrow[r, hook] \arrow[d] & \mc{S} \arrow[d] & {S}_{\eta} \arrow[l, hook] \arrow[d] & {S}_{\overline{\eta}} \arrow[d] \arrow[l] \\
                \{0\} = Spec(k) \arrow[r] & Spec(W(k)) & Spec(K) = \{\eta\} \arrow[l] & Spec(\overline{K}) = \{\overline{\eta}\} \arrow[l]
            \end{tikzcd}
        \]
        where
        \[
          S_0 = \{0\} \times_{Spec(W(k))} \mc{S}, \text{ the special fiber}\index{special fiber},
        \]
        \[
          S_\eta = \{\eta\} \times_{Spec(W(k))} \mc{S}, \text{ the generic fiber}\index{generic fiber},
        \]
        \[
          S_{\overline{\eta}} = \{\overline{\eta}\} \times_{\{\eta\}} {S}_\eta, \text{ the geometric generic fiber}\index{geometric generic fiber}.
        \]

        Since $\mc{S}_0, \mc{S}_{\eta}$ and $\mc{S}_{\overline{\eta}}$ are smooth projective connected surfaces, we can define linear systems of very ample divisors $D_0, D_{\eta}$ and $D_{\overline{\eta}}$, respectively. Since the family is polarized (see \cite[\S 3.3.3]{Sernesi}), the lift $\mc{S}$ is equipped with a relatively very ample line bundle $\mathcal{L}$ such that its restriction to the special fiber is $\mathcal{L}|_{\mc{S}_0} \cong \mathcal{O}_{\mc{S}_0}(D_0)$.

        Because very ampleness is an open condition on the base \cite[Théorème 4.7.1]{EGAIII}, and $\mathcal{L}|_{\mc{S}_0}$ is very ample, the line bundle $\mathcal{L}$ is relatively very ample. Consequently, $\mathcal{L}$ determines a global closed embedding of the family $\Phi_{\mathcal{L}}:\mc{S} \hookrightarrow \mathbb{P}_{W(k)}^d$ where $d = h^0(\mathcal{L}|_{\mc{S}_{\eta}})-1$. Restricting this embedding to the fibers shows that the linear systems corresponding to $D_0, D_{\eta}$ and $D_{\overline{\eta}}$ all have the same dimension $d$. Thus, we obtain the closed embeddings into $\mathbb{P}^d$ induced by these linear systems.

        \[
            \phi_{\Sigma_0}:S_0 \hookrightarrow \mathbb{P}^d,
        \]
        \[
            \phi_{\Sigma_\eta}:S_\eta \hookrightarrow \mathbb{P}^d,
        \]
        \[
            \phi_{\Sigma_{\overline{\eta}}}:S_{\overline{\eta}} \hookrightarrow \mathbb{P}^d.
        \]
        
        For any closed point $t_0 \in \Sigma_0 = (\mathbb{P}^{d})^\vee$, let $H_{t_0}$ be the hyperplane in $\mathbb{P}^d$ defined by $t_0$, let $C_{t_0} = H_{t_0} \cap S_0$ be the corresponding hyperplane section of ${S}_0$ and let

        \[
            r_{t_0}:C_{t_0} \hookrightarrow S_0
        \]
        be the closed embedding. Similarly, we can do the same for any closed points $t_\eta \in \Sigma_\eta$ and $t_{\overline{\eta}} \in \Sigma_{\overline{\eta}}$ giving the closed embeddings
        \[
            r_{t_\eta}:C_{t_\eta} \hookrightarrow S_\eta,
        \]
        \[
            r_{t_{\overline{\eta}}}:C_{t_{\overline{\eta}}} \hookrightarrow S_{\overline{\eta}}.
        \]
        Let $\Delta_0, \Delta_\eta$ and $\Delta_{\overline{\eta}}$ be the \textit{discriminant locus}\index{discriminant locus} of their linear systems, i.e.
        \[
            \Delta_0 = \{t_0 \in \Sigma_0 = (\mathbb{P}^{d})^\vee : C_{t_0} \text{ is singular}\},
        \]
        \[
            \Delta_\eta = \{t_\eta \in \Sigma_\eta = (\mathbb{P}^{d})^\vee : C_{t_\eta} \text{ is singular}\},
        \]\[
            \Delta_{\overline{\eta}} = \{t_{\overline{\eta}} \in \Sigma_{\overline{\eta}} = (\mathbb{P}^{d})^\vee : C_{t_{\overline{\eta}}} \text{ is singular}\}.
        \]
        Let $U_0 = \Sigma_0 \setminus \Delta_0,\, U_\eta = \Sigma_\eta \setminus \Delta_\eta$ and $U_{\overline{\eta}} = \Sigma_{\overline{\eta}} \setminus \Delta_{\overline{\eta}}$ be the complement of the discriminant locus parametrizing smooth curves on their respective surfaces. Then we get the respective \textit{Gysin homomorphisms}\index{Gysin homomorphism!cohomology groups} on cohomology groups
        \[
            r_{t_0*}:H_{\acute{e}t}^1(C_{t_0}, \Lambda) \to H_{\acute{e}t}^3(S_0, \Lambda(1)),
        \]
        \[
            r_{t_\eta*}:H_{\acute{e}t}^1(C_{t_\eta}, \Lambda) \to H_{\acute{e}t}^3(S_\eta, \Lambda(1)),
        \]
        \[
            r_{t_{\overline{\eta}}*}:H_{\acute{e}t}^1(C_{t_{\overline{\eta}}}, \Lambda) \to H_{\acute{e}t}^3(S_{\overline{\eta}}, \Lambda(1)),
        \]
        where $\Lambda$ is a finite abelian group (see \cite[\S 24 The Gysin map]{JMilLec}). For each Gysin homomorphism, define the \textit{vanishing cohomology}\index{vanishing cohomology} to be the kernel, i.e.
        \[
            H_{\acute{e}t}^1(C_{t_0}, \Lambda)_{van} = \ker(r_{t_0*}),
        \]
        \[
            H_{\acute{e}t}^1(C_{t_\eta}, \Lambda)_{van} = \ker(r_{t_\eta*}),
        \]
        \[
            H_{\acute{e}t}^1(C_{t_{\overline{\eta}}}, \Lambda)_{van} = \ker(r_{t_{\overline{\eta}}*}).
        \]
        
        Let $J_t = J(C_t)$ be the \textit{Jacobian}\index{jacobian}, i.e. the Jacobian of the curve $C_t$. Thus in our setting, we have the Jacobians
        \[
            J_{t_0} = J(C_{t_0}),
        \]
        \[
            J_{t_\eta} = J(C_{t_\eta})
        \]
        \[
            J_{t_{\overline{\eta}}} = J(C_{t_{\overline{\eta}}}).
        \]
        The Jacobian of the vanishing cohomology is denoted by $B_t$, that is, we have
        \[
            B_{t_0} = J(H_{\acute{e}t}^1(C_{t_0}, \Lambda)_{van}),
        \]
        \[
            B_{t_\eta} = J(H_{\acute{e}t}^1(C_{t_\eta}, \Lambda)_{van}),
        \]
        \[
            B_{t_{\overline{\eta}}} = J(H_{\acute{e}t}^1(C_{t_{\overline{\eta}}}, \Lambda)_{van}).
        \]
        It is known that we can push-forward cycles (see \cite[Theorem 1.4]{FulInt}) and that the degree of $0$-cycles are preserved under this push-forward (see \cite[Definition 1.4]{FulInt}). Thus, we get the \textit{Gysin homomorphism}\index{Gysin homomorphism!Chow groups} on the Chow groups of 0-cycles of degree zero,
        \[
            r_{t_0*}:CH_0(C_{t_0})_{\deg=0} \to CH_0(S_0)_{\deg=0},
        \]
        \[
            r_{t_\eta*}:CH_0(C_{t_\eta})_{\deg=0} \to CH_0(S_\eta)_{\deg=0},
        \]
        \[
            r_{t_{\overline{\eta}*}}:CH_0(C_{t_{\overline{\eta}}})_{\deg=0} \to CH_0(S_{\overline{\eta}})_{\deg=0}.
        \]
        The \textit{Gysin kernel}\index{Gysin kernel} is denoted as
        \[
            G_{t_0} = \ker(r_{t_0*}),
        \]
        \[
            G_{t_\eta} = \ker(r_{t_\eta*}),
        \]
        \[
            G_{t_{\overline{\eta}}} = \ker(r_{t_{\overline{\eta}}*}).
        \]
        
        \subsection{The specialization homomorphism}
        While there is no morphism of schemes from the generic fiber to the special fiber, the flatness of the family allows us to define a \textit{specialization homomorphism} on the level of cycles. Following \cite[\S 20.3]{FulInt}, for any cycle $Z$ on the generic fiber $S_{\overline{\eta}}$, we define its specialization $sp(Z)$ by taking the Zariski closure $\overline{Z}$ in the total family $\mc{S}$ and restricting to the special fiber. Since the family is flat, this operation respects rational equivalence and preserves the degree of $0$-cycles. This yields the \textit{specialization map} on Chow groups:
        \[
            sp_{\mc{S}}: CH_0(S_{\overline{\eta}})_{\deg=0} \to CH_0({S}_0)_{\deg=0}.
        \]

        A similar homomorphism $sp_{\mc{C}}$ exists for the family of curves.

        To relate this to the symmetric products, we consider the relative symmetric product of the family, $\mathcal{S}ym^d(\mathcal{C}/W(k))$. As the construction of the symmetric product commutes with base change for flat families (geometric quotients by finite groups commute with any flat base change \cite[Expos\'{e} V \S 1]{SGAI}), the fibers of this relative scheme are isomorphic to $Sym^d(C_{\overline{\eta}})$ and $Sym^d(C_0)$, respectively. Since $\mathcal{S}ym^d(\mathcal{C})$ is proper over $W(k)$, we apply the valuative criterion of properness (see \cite[Theorem 4.7]{Hart}). Any geometric point $P_{\overline{\eta}} \in Sym^d(C_{\overline{\eta}})(\overline{K})$ extends to a unique section over $W(k)$, which intersects the special fiber at a unique point $P_0 \in Sym^d(C_0)(k)$. This defines a specialization map on the set of geometric points:

        \[
            sp_{Sym}: Sym^d(C_{\overline{\eta}})(\overline{K}) \to Sym^d(C_0)(k).
            \]

            This map is compatible with the specialization on Chow groups via the difference map $\theta$. Consequently, we obtain the following commutative diagram relating the fibers:

        \[
            \begin{tikzcd}
                {Sym^{d,d}(C_{t_{\overline{\eta}}})(\overline{K})} \arrow[dd, "{\theta_{d,d}^{C_{t_{\overline{\eta}}}}}"] \arrow[rr, "sp_{_{Sym}}", dashed] \arrow[rd, "{r_{t_{\overline{\eta}}, Sym}}"] & & {Sym^{d,d}(C_{t_0})(k)} \arrow[rd, "{r_{t_0, Sym}}"] \arrow[dd, "{\theta_{d,d}^{C_{t_0}}}" near start] & \\
                                                                                                                & {Sym^{d,d}(S_{\overline{\eta}})(\overline{K})} \arrow[rr, "sp_{_{Sym}}" near start, dashed] \arrow[dd, "{\theta_{d,d}^{{S}_{\overline{\eta}}}}" near start] & & {Sym^{d,d}({S}_0)(k)} \arrow[dd, "{\theta_{d,d}^{{S}_0}}"] \\
                CH_0(C_{t_{\overline{\eta}}})_{\deg=0} \arrow[dd, "alb_{C_{t_{\overline{\eta}}}}"] \arrow[rr, "sp_{\mc{C}}" near start] \arrow[rd, "r_{t_{\overline{\eta}}*}"] & & CH_0(C_{t_0})_{\deg=0} \arrow[rd, "r_{t_0*}"] \arrow[dd, "alb_{C_{t_0}}" near start] & \\
                                                                                                                & CH_0(S_{\overline{\eta}})_{\deg=0} \arrow[rr, "sp_{_{\mc{S}}}" near start] & & CH_0({S}_0)_{\deg=0} \\
                J_{t_{\overline{\eta}}} \arrow[rr, "sp_{_J}"] & & J_{t_0} &
            \end{tikzcd}
        \]
        
        where the dashed arrows represent the set-theoretic specialization of points described above, and the solid horizontal arrows represent the homomorphisms on Chow groups and Abelian varieties.
        
\section{Countability Lemmas}
    The proofs can be found in \cite[\S 4]{BanGul}. They work in this new general case due to Noether normalization lemma \cite{ENoeth}, i.e. since $k$ is uncountable, we get an uncountable amount of hyperplanes.

    \begin{lemma}\label{CountContradiction}
        Let $V$ be an irreducible quasi-projective variety over an uncountable field $k$ of arbitrary characteristic. Then $V$ cannot be written as a countable union of its Zariski closed subsets, each of which is not the whole of $V$.
    \end{lemma}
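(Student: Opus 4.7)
The plan is to reduce to the case $V = \mathbb{A}^d$ via Noether normalization and then settle affine space by a projection-and-pigeonhole induction on dimension.

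\textbf{Reduction to $\mathbb{A}^d$.} First I would pass to an affine open $U \subset V$: if $V = \bigcup_i Z_i$ with each $Z_i \subsetneq V$ closed, then each $Z_i \cap U \subsetneq U$ (because $U$ is dense in the irreducible $V$, so $Z_i \cap U = U$ would force $V = \overline{U} \subseteq Z_i$), and $U = \bigcup_i (Z_i \cap U)$. So I may assume $V$ is affine and irreducible of some dimension $d$. Noether normalization then gives a finite surjective morphism $\pi : V \to \mathbb{A}^d$. Finite morphisms are closed, so each $\pi(Z_i)$ is closed in $\mathbb{A}^d$; and since $\pi|_{Z_i}$ is finite with $\dim Z_i < \dim V$, one has $\dim \pi(Z_i) < d$, hence $\pi(Z_i) \subsetneq \mathbb{A}^d$. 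Thus $\mathbb{A}^d = \pi(V) = \bigcup_i \pi(Z_i)$ would again be a countable union of proper closed subsets, and it suffices to rule this out.

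\textbf{Affine space by induction on $d$.} For $d = 1$, the proper closed subsets of $\mathbb{A}^1$ are finite, so a countable union of them is at most countable, contradicting the uncountability of $k = \mathbb{A}^1(k)$. For $d > 1$, assume the statement in dimension $d-1$ and suppose $\mathbb{A}^d = \bigcup_i W_i$ with each $W_i \subsetneq \mathbb{A}^d$ closed. I would project onto the first coordinate $p : \mathbb{A}^d \to \mathbb{A}^1$; each fiber $p^{-1}(a) \cong \mathbb{A}^{d-1}$ decomposes as $\bigcup_i \bigl( W_i \cap p^{-1}(a) \bigr)$, so by the inductive hypothesis there is some index $i(a)$ with $p^{-1}(a) \subseteq W_{i(a)}$.

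\textbf{Pigeonhole and conclusion.} The assignment $a \mapsto i(a)$ sends the uncountable set $k$ into a countable index set, so some value $i^*$ is attained on an infinite subset $A \subseteq k$. Then $W_{i^*} \supseteq p^{-1}(A)$, whose Zariski closure is $p^{-1}(\overline{A}) = p^{-1}(\mathbb{A}^1) = \mathbb{A}^d$ because any infinite subset of the irreducible curve $\mathbb{A}^1$ is Zariski-dense. This forces $W_{i^*} = \mathbb{A}^d$, contradicting properness. The main obstacle I anticipate is just careful bookkeeping: verifying that the pushforward $\pi(Z_i)$ under Noether normalization really is a proper closed subset (which uses finiteness of $\pi|_{Z_i}$), and handling the pigeonhole step correctly — and it is exactly in that pigeonhole that the uncountability hypothesis on $k$ becomes indispensable, the rest of the argument being formal.
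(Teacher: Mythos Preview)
Your proof is correct and follows the same strategy as the paper: reduce to an affine open, apply Noether normalization to pass to $\mathbb{A}^d$, and induct on $d$ down to the base case $d=1$. The only variation is in the inductive step---the paper picks a single hyperplane $H$ with $H \not\subseteq W_n$ for every $n$ (each proper closed $W_n$ has dimension $<d$ and hence contains at most finitely many hyperplanes, while $k$ uncountable gives uncountably many hyperplanes) and restricts to $H \cong \mathbb{A}^{d-1}$, whereas you project to $\mathbb{A}^1$, apply the inductive hypothesis fiberwise, and pigeonhole; both arguments are valid, and yours is in fact more explicit about why $\pi(Z_i) \subsetneq \mathbb{A}^d$, a point the paper leaves implicit.
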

    
    \begin{definition}[Irredundant countable union\index{irredundant}]
        A countable union $V = \bigcup_{n \in \mathbb{N}} V_n$ of algebraic varieties will be called \textit{irredundant} if $V_n$ is irreducible for each $n$ and $V_m \not\subset V_n$ for $m \neq n$. If $V$ is an irredundant decomposition, then the sets $V_n$ are called c-components of $V$.
    \end{definition}
    
    \begin{lemma}\label{uniqueIrred}
        Let $V$ be a countable union of algebraic varieties over an uncountable algebraically closed ground field $k$ of arbitrary characteristic. Then $V$ admits an irredundant decomposition, and such an irredundant decomposition is unique.
    \end{lemma}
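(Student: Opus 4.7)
The plan is to handle existence by collecting all the irreducible components of the varieties $V_n$ and then discarding those contained in another, while uniqueness will follow directly from \autoref{CountContradiction} applied to intersections between components of two candidate decompositions.

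For existence, I would first decompose each algebraic variety $V_n$ into its finitely many irreducible components, obtaining a countable family of irreducible closed subvarieties $\{W_i\}_{i\in\mathbb{N}}$ whose union is still $V$. Call an index $i$ \emph{essential} if $W_i$ is not properly contained in any other $W_j$, and let $I$ denote the set of essential indices. Since all the $W_i$ sit inside a common ambient variety of finite dimension (as is implicit throughout the paper's setup), any strictly ascending chain $W_j\subsetneq W_{j_1}\subsetneq W_{j_2}\subsetneq\cdots$ of irreducible subvarieties must terminate after boundedly many steps. Hence every non-essential $W_j$ is contained in some essential $W_i$, so $V=\bigcup_{i\in I}W_i$ is an irredundant decomposition.

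For uniqueness, suppose $V=\bigcup_i A_i = \bigcup_j B_j$ are two irredundant decompositions. Fix any c-component $A_i$; intersecting with the second decomposition yields $A_i=\bigcup_j(A_i\cap B_j)$, which writes the irreducible quasi-projective variety $A_i$ as a countable union of its Zariski closed subsets. By \autoref{CountContradiction}, at least one of these subsets must equal $A_i$, that is, $A_i\subseteq B_j$ for some $j$. Reversing the roles, $B_j\subseteq A_{i'}$ for some $i'$, giving $A_i\subseteq A_{i'}$; irredundancy of the $A$-decomposition forces $i=i'$, and equality holds throughout, so $A_i=B_j$. By symmetry every $B_j$ also coincides with some $A_i$, and the two decompositions agree.

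The main obstacle, as I see it, is actually the existence step rather than uniqueness: one has to justify that the pruning process terminates, which requires the unstated but natural assumption that the $V_n$ embed in a common finite-dimensional ambient variety, so that lengths of ascending chains of irreducible subvarieties are a priori bounded. The uniqueness half, by contrast, is an immediate and elegant consequence of \autoref{CountContradiction} combined with the irredundancy hypothesis.
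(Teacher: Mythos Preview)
Your proof is correct and follows essentially the same route as the paper: decompose each $V_n$ into irreducible components, discard those contained in another, and for uniqueness apply \autoref{CountContradiction} to $A_i=\bigcup_j(A_i\cap B_j)$ exactly as the paper does. You are in fact more careful than the paper in the existence step, where the paper simply asserts that discarding redundant components yields an irredundant decomposition; your observation that one needs a bound on ascending chains (via a common finite-dimensional ambient variety) to guarantee every component is contained in a maximal one is a genuine point the paper leaves implicit, and it is justified in every application the paper makes of this lemma.
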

    
    \begin{lemma}\label{irredundantAbelian}
        Let $A$ be an abelian variety over an uncountable field $k$ of arbitrary characteristic, and let $K$ be a subgroup which can be represented as a countable union of Zariski closed subsets in $A$. Then the irredundant decomposition of $K$ contains a unique irreducible component passing through $0$, and this component is an abelian subvariety in $A$.
    \end{lemma}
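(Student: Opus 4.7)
The plan is to apply \autoref{uniqueIrred} to obtain the unique irredundant decomposition $K = \bigcup_n K_n$ with each $K_n$ irreducible, choose any component $A_0 = K_{n_0}$ containing $0$ (which exists since $0 \in K$), and then prove: (i) $A_0$ is closed under addition and inversion in $A$, hence an abelian subvariety; and (ii) $A_0$ is the only component of the decomposition passing through $0$.

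The workhorse is the following absorption principle: if $\varphi\colon X \to A$ is a morphism from an irreducible variety $X$ with $\varphi(X) \subset K$ as a set, then $\overline{\varphi(X)} \subset K_l$ for some single index $l$. Indeed, by Chevalley's theorem $\varphi(X)$ is constructible in $A$, so it contains a dense open subset $U$ of its closure $\overline{\varphi(X)}$, which is irreducible since $X$ is. Writing $U = \bigcup_n (U \cap K_n)$ as a countable union of closed subsets of the irreducible quasi-projective variety $U$, \autoref{CountContradiction} (applicable since $k$ is uncountable) forces $U = U \cap K_n$ for some $n$, and taking closures gives $\overline{\varphi(X)} = \overline{U} \subset K_n$.

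Applying this to the addition morphism $\sigma\colon A_0 \times A_0 \to A$, whose image lies in $K$ since $K$ is a subgroup, I obtain an index $l$ with $\overline{\sigma(A_0 \times A_0)} \subset K_l$. Since $\sigma$ restricts to the identity on $A_0 \times \{0\}$, one has $A_0 \subset K_l$; irredundancy forces $K_l = A_0$, and in particular $A_0 + A_0 \subset A_0$. The same principle applied to the inversion morphism $A_0 \to A$, $a \mapsto -a$, yields $-A_0 \subset A_0$. Hence $A_0$ is an irreducible closed subgroup of $A$, i.e.\ an abelian subvariety. For uniqueness, if $K_m$ is another component with $0 \in K_m$, the absorption principle applied to the addition morphism $A_0 \times K_m \to A$ yields an $l$ with $\overline{A_0 + K_m} \subset K_l$; substituting $0$ in either factor shows both $A_0 \subset K_l$ and $K_m \subset K_l$, so irredundancy gives $A_0 = K_l = K_m$.

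The main obstacle is that the image $A_0 + A_0$ (or $A_0 + K_m$) is neither a priori closed nor obviously contained in a single $K_l$; the absorption principle is precisely what routes around this, by trading the non-closedness of morphism images for constructibility and then exploiting the uncountability of $k$ through \autoref{CountContradiction}. All remaining steps reduce to the standard fact that an irreducible closed subset of an abelian variety containing the identity and stable under the group operations is an abelian subvariety.
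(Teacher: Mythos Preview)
Your argument is essentially the paper's: both use \autoref{CountContradiction} to show that the image of a sum of components lands in a single component, then invoke irredundancy. The paper, however, proves uniqueness \emph{first} (by summing all components through $0$), and this matters for your inversion step. As you have ordered things, ``the same principle'' applied to $a\mapsto -a$ only gives $-A_0\subset K_l$ for some $l$ with $0\in K_l$; without uniqueness already in hand you cannot yet conclude $K_l=A_0$. The fix is trivial: either run your uniqueness argument before treating inversion (it does not use closure under inversion), or note that $A_0+A_0=A_0$ already forces $a+A_0=A_0$ for each $a\in A_0$ by comparing dimensions of irreducible closed sets, whence $-a\in A_0$.

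One further remark: your absorption principle is correct but heavier than needed. The $K_n$ are closed in the projective variety $A$, hence projective, so the sums $K_i+K_j$ (and $-K_i$) are images of morphisms from projective varieties and are therefore already closed; no appeal to Chevalley or constructibility is required, and \autoref{CountContradiction} applies directly to the irreducible closed image. This is how the paper proceeds.
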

    
\section{Regular maps and Representability of \texorpdfstring{$CH_0(X)_{\deg=0}$}{Lg}}
    Let $X$ be a nonsingular projective variety of dimension $n$ that in this section is defined over an uncountable algebraically closed field of arbitrary characteristic.

    Several results in this section are classical when the base field is $\mathbb{C}$ and are proved in this setting in \cite{RoitRat} and \cite{CVoi2}. Our aim here is not to reproduce these arguments, but to explain why they extend without change to smooth projective varieties defined over an uncountable algebraically closed field of arbitrary characteristic.

    \subsection{Regular maps}
    
        \begin{definition}[c-closed and c-open\index{c-closed}\index{c-open}]
            \label{copen}
            A subset of an integral algebraic scheme $T$ which is the union of a countable number of closed subsets is called a \textit{c-closed}\index{c-open} subset and the complement of a \textit{c-closed}, i.e. intersections of a countable number of open subsets, is called a c-open subset.
        \end{definition}
        
        \begin{definition}[Difference map\index{difference map}]\label{diffMap}
          The set-theoretic map $\theta_{d_{1},d_{2}}^{X}:Sym^{d_{1}}(X) \times Sym^{d_{2}}(X) \to CH_{0}(X), (A,B) \mapsto [A-B]$, where $[A-B]$ is the class of the cycle $A-B$ modulo rational equivalence, will be called the \textit{difference map}.
        \end{definition}
        
        Let $W^{d_{1},d_{2}} = \{(A,B;C,D) \in Sym^{d_{1}, d_{2}}(X) \times Sym^{d_{1}, d_{2}}(X) : \theta_{d_{1},d_{2}}^{X} (A,B) = \theta_{d_{1}, d_{2}}^{X}(C,D) \}$ $= \{(A,B;C,D) \in Sym^{d_{1}, d_{2}}(X) \times Sym^{d_{1}, d_{2}}(X) : (A-B) \sim_{\text{rat}} (C-D) \}$ be the subset of $Sym^{d_1, d_2}(X) \times Sym^{d_1, d_2}(X)$ defining the rational equivalence on $Sym^{d_1, d_2}(X)$.
        
        \begin{lemma}\label{Wdcclosed}
            The subset $W^{d_1, d_2}$ is c-closed.
        \end{lemma}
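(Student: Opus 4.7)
The plan is to present $W^{d_1,d_2}$ as a countable union of Zariski closed subsets of the ambient product of symmetric powers. First I would reduce the claim to the rational-equivalence locus on a single symmetric power. Setting $d = d_1 + d_2$, the condition $(A - B) \sim_{rat} (C - D)$ is equivalent to $A + D \sim_{rat} B + C$ as effective $0$-cycles of degree $d$ on $X$. Hence the regular addition map
\[
\sigma \colon Sym^{d_1,d_2}(X) \times Sym^{d_1,d_2}(X) \longrightarrow Sym^d(X) \times Sym^d(X), \qquad (A,B,C,D) \longmapsto (A+D,\; B+C),
\]
identifies $W^{d_1,d_2}$ with $\sigma^{-1}(R^d)$, where $R^d := \{(E,F) \in Sym^d(X)^2 : E \sim_{rat} F\}$. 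Preimages of c-closed sets under morphisms are c-closed, so it suffices to prove that $R^d$ is c-closed in $Sym^d(X) \times Sym^d(X)$.

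Second, I would parametrize rational equivalences by a countable family of projective moduli spaces. Two effective $0$-cycles $E,F$ of degree $d$ on $X$ are rationally equivalent precisely when there exist finitely many integral curves $Y_i \subset X$ and rational functions $f_i \in k(Y_i)^\ast$ such that, as $0$-cycles on $X$,
\[
E \;=\; \sum_i (f_i)_0, \qquad F \;=\; \sum_i (f_i)_\infty .
\]
The discrete data recording the number of components, their Hilbert polynomials with respect to a fixed polarization on $X$, and the degree of each $f_i$, ranges over a countable index set $I$. For each $\iota \in I$ I would assemble the moduli space $\mc{M}_\iota$ of such tuples $(Y_i, f_i)$ concretely as the component of the Hilbert scheme of $X \times \mathbb{P}^1$ parametrizing flat families $\mc{Y} \subset X \times \mathbb{P}^1$ of the prescribed numerical type; this Hilbert scheme is projective. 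The evaluation-at-fibers morphism
\[
\mu_\iota \colon \mc{M}_\iota \longrightarrow Sym^d(X) \times Sym^d(X), \qquad \mc{Y} \longmapsto \bigl(\mc{Y}_0,\; \mc{Y}_\infty\bigr),
\]
is therefore proper, so its image is Zariski closed. Taking the union then yields $R^d = \bigcup_{\iota \in I} \mu_\iota(\mc{M}_\iota)$, whence $R^d$, and hence $W^{d_1,d_2}$, is c-closed.

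The main obstacle is arranging each parameter space so that $\mu_\iota(\mc{M}_\iota)$ is genuinely closed rather than merely constructible. A naive space of pairs (curve, rational function) is only quasi-projective, which would express $R^d$ as a countable union of constructible sets and not as a c-closed set. Reformulating the data as a flat family $\mc{Y} \subset X \times \mathbb{P}^1$ of fixed numerical type — with the Hilbert polynomial of $\mc{Y}$, rather than the separate data of curves and functions, serving as the discrete invariant — makes $\mc{M}_\iota$ projective and $\mu_\iota$ proper. This is the strategy going back to Mumford's Lemma 3 in \cite{Mum} and used in \cite{RoitRat, RoitGamma, BanGul}. Over our uncountable algebraically closed field of positive characteristic, Hilbert schemes of subschemes of $X \times \mathbb{P}^1$ of fixed Hilbert polynomial are still projective and the same argument applies verbatim, with no appeal to characteristic zero or Hodge-theoretic input.
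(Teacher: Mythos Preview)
Your reduction via the addition map $\sigma$ is exactly the paper's final step (the paper calls this map $\gamma$), so the substance of the argument is the c-closedness of $R^d$. Here your sketch has a genuine gap. You propose to take $\mc{M}_\iota$ to be a component of the Hilbert scheme of $X\times\mathbb{P}^1$ and assert that this makes $\mc{M}_\iota$ projective and $\mu_\iota$ proper. But the fiber-evaluation $\mc{Y}\mapsto(\mc{Y}_0,\mc{Y}_\infty)$ is only defined on the locus where $\mc{Y}$ has no component contained in $X\times\{0\}$ or $X\times\{\infty\}$, equivalently where $\mc{Y}$ is flat over $\mathbb{P}^1$; that locus is \emph{open}, not closed, in the Hilbert scheme. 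So either $\mc{M}_\iota$ is the full projective component and $\mu_\iota$ is only a rational map, or $\mc{M}_\iota$ is the flat locus and is merely quasi-projective. In both cases you are back to constructible images, and the obstacle you correctly flagged has not been removed. (A secondary point: $E\sim_{rat}F$ does not force $E=\sum_i(f_i)_0$ and $F=\sum_i(f_i)_\infty$ on the nose, only $E-F=\sum_i\operatorname{div}(f_i)$; one must allow an auxiliary effective cycle, which is why the paper's parameter spaces carry the extra index $k$.)

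The paper, following Ro\u{i}tman, does not try to compactify the parameter space. It uses the quasi-projective $\hom^p(\mathbb{P}^1,Sym^{n+k,k}(X))$, obtains constructible images $T_n^{k,p}$, and then proves the closure statement $\overline{T_n^{k,p}}\subset W^n$ directly. For a boundary point $(A,B)$ one chooses an affine curve in $T_m^{0,p}$ whose projective closure contains $(A,B)$, lifts it to a curve in the $\hom$-scheme, normalizes its projective closure $Y$, and gets a rational map $Y\times\mathbb{P}^1\dashrightarrow Sym^m(X)$; resolving its indeterminacy by a sequence of $\sigma$-processes shows that $A$ and $B$ are joined by a chain of rational curves in $Sym^m(X)$, hence $A\sim_{rat}B$. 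This limit argument is precisely what your outline is missing, and passing to Hilbert schemes of $X\times\mathbb{P}^1$ does not sidestep it.
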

        
        \begin{proof}
            The fact that the subset $W^{d_1,d_2}$ is a countable union of closed subsets is proved over $\mathbb{C}$ in \cite[Theorem~1]{RoitRat}. We briefly explain why the same argument applies over an arbitrary uncountable algebraically closed field.

            Roitman’s proof is entirely algebraic. The key step is to express the relation of rational equivalence between two effective cycles of fixed degree as the image, under a projection morphism, of a parameter space of algebraic maps from pointed curves into symmetric powers of $X$. More precisely, $W^{d_1,d_2}$ is written as a countable union of subsets, each of which is the image of a quasiprojective variety constructed as a fiber product involving a Hom-scheme of morphisms from a curve to a symmetric product of $X$. The proof then shows that each such subset is constructible and, in fact, closed.

            All ingredients involved in this construction - Hom-schemes of morphisms between projective varieties, symmetric products, fiber products and images of morphisms of finite type - are available over any algebraically closed field. The argument uses 
            general properties of morphisms of finite type and the description of rational equivalence via algebraic families parametrized by curves. No transcendental methods or characteristic zero assumptions are applied.

            Consequently, the proof of \cite[Theorem~1]{RoitRat} carries over verbatim to our setting, and $W^{d_1,d_2}$ is c-closed over an arbitrary uncountable algebraically closed field.
        \end{proof}
        
        \begin{definition}[Regular map into $CH_0(X)$\index{regular map}]
            A set-theoretic map $\kappa:Z \to CH_0(X)$ of an algebraic variety $Z$ into the Chow group of $0$-cycles $CH_0(X)$ will be called \textit{regular} if there exists a commutative diagram (set-theoretically)
            \[
                \begin{tikzcd}
                    Y \arrow[r, "f"] \arrow[d, "g"] & Sym^{d_1,d_2}(X) \arrow[d, "\theta^X_{d_1, d_2}"] \\
                    Z \arrow[r, "\kappa"] & CH_0(X)
                \end{tikzcd}
            \]
            where $f$ is a regular map and $g$ is an epimorphism which is also a regular map.

            If $Z$ is projective (irreducible), then its image $\kappa(z) \subset CH_0(X)$ under a regular map will be called a \textit{closed} (irreducible) subset.
        \end{definition}
        
        Equivalently, set-theoretic regular maps into $CH_0(X)$ can be defined as follows \cite[Lemma 4]{RoitRat}.
        
        \begin{lemma}[Alternative definition of a regular map into $CH_0(X)$]
            The set-theoretic map $\kappa:Z \to CH_0(X)$ is regular if and only if for any integers $d_1$ and $d_2$ the subset $W_{\kappa, \theta_{d_1, d_2}^X} = \{(z, A, B) \in Z \times Sym^{d_1, d_2}(X) : \kappa(z) = \theta_{d_1, d_2}^X(A, B)\} = Z \times_{CH_0(X)} Sym^{d_1,d_2}(X)$ is c-closed.
        \end{lemma}
        
        \begin{lemma}\label{reg1}
            The map $\theta_{d_1, d_2}^X:Sym^{d_1, d_2}(X) \to CH_0(X)$ is regular.
        \end{lemma}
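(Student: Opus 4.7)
The plan is to verify the lemma by direct application of the definition of a regular map into $CH_0(X)$. I would take $Z = Y = Sym^{d_1,d_2}(X)$, set $\kappa = \theta_{d_1,d_2}^X$, choose $(d_1',d_2') = (d_1,d_2)$ in the definition, and take both $f$ and $g$ to be the identity. The resulting square
\[
\begin{tikzcd}
Sym^{d_1,d_2}(X) \arrow[r, "\mathrm{id}"] \arrow[d, "\mathrm{id}"] & Sym^{d_1,d_2}(X) \arrow[d, "\theta^X_{d_1,d_2}"] \\
Sym^{d_1,d_2}(X) \arrow[r, "\theta^X_{d_1,d_2}"] & CH_0(X)
\end{tikzcd}
\]
commutes tautologically, the vertical identity is a regular epimorphism, and the horizontal identity is a regular map; all hypotheses of the definition are met.

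As a sanity check, I would also verify the equivalent c-closedness criterion: for arbitrary integers $d_1', d_2' \geq 0$ the set
\[ W_{\theta_{d_1,d_2}^X,\, \theta_{d_1',d_2'}^X} = \{(A,B,C,D) : A - B \sim_{rat} C - D\} \]
inside $Sym^{d_1,d_2}(X) \times Sym^{d_1',d_2'}(X)$ must be c-closed. Rewriting the condition as $A + D \sim_{rat} B + C$, I first observe that when $d_1 + d_2' \neq d_2 + d_1'$ the equivalence is forbidden by degree considerations and the set is empty. Otherwise set $n = d_1 + d_2' = d_2 + d_1'$ and consider the regular morphism
\[ \sigma: Sym^{d_1,d_2,d_1',d_2'}(X) \to Sym^{n,n}(X), \quad (A,B,C,D) \mapsto (A+D,\, B+C). \]
Then $W_{\theta_{d_1,d_2}^X,\, \theta_{d_1',d_2'}^X} = \sigma^{-1}(W^{n,0})$, where $W^{n,0}$ is c-closed by the proof of \autoref{Wdcclosed}, and preimages of c-closed sets under regular morphisms are c-closed.

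There is essentially no obstacle here: the lemma is a near-tautology of the definition of regular map, since $\theta_{d_1,d_2}^X$ is precisely the reference map in terms of which the definition is phrased. The only minor subtlety, which arises only in the alternative route, is the bookkeeping of degrees when re-expressing the rational equivalence $A - B \sim_{rat} C - D$ between effective differences of possibly different individual degrees.
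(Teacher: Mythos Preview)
Your proof is correct. Your primary route---taking $Y=Z=Sym^{d_1,d_2}(X)$ and $f=g=\mathrm{id}$ in the original definition---is even more direct than the paper's own argument, which instead invokes the alternative c-closedness criterion together with \autoref{Wdcclosed}. Your ``sanity check'' via the map $\sigma$ and the pullback $\sigma^{-1}(W^{n,0})$ is essentially the paper's method, and in fact you carry it out more carefully: the paper only cites the c-closedness of $W^{d_1,d_2}$ (the diagonal case $(d_1',d_2')=(d_1,d_2)$), whereas the alternative definition requires c-closedness of $W_{\theta_{d_1,d_2}^X,\,\theta_{d_1',d_2'}^X}$ for \emph{all} $(d_1',d_2')$, which your degree-matching and reduction to $W^{n,0}$ handle explicitly. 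The tautological square has the virtue of bypassing \autoref{Wdcclosed} entirely for this lemma; the alternative route has the virtue of exercising the machinery that is genuinely needed elsewhere.
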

        
        \begin{lemma}\label{reg2}
            Let $\kappa:Z \to CH_0(X)_{\deg=0}$ be a regular map and let
            \[ alb_X:CH_0(X)_{\deg=0} \to Alb(X) \]
            be the Albanese map. Then the composite map $alb_X \circ \kappa:Z \to Alb(X)$ is a regular map of algebraic varieties.
        \end{lemma}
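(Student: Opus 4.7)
The plan is to observe that the Albanese-composed symmetric-product map $alb_X \circ \theta_{d_1, d_2}^X$ is actually a morphism of algebraic varieties, pull back along the diagram witnessing regularity of $\kappa$, and then descend through the surjection $g$. First I would verify that $\Psi_{d_1, d_2} := alb_X \circ \theta_{d_1, d_2}^X : Sym^{d_1, d_2}(X) \to Alb(X)$ is a morphism of varieties. After fixing a $k$-rational point $P_0 \in X$, the Abel--Jacobi-type assignment $P \mapsto alb_X([P] - [P_0])$ is a morphism $X \to Alb(X)$; by $\Sigma_d$-invariance it extends to a morphism $Sym^d(X) \to Alb(X)$; the induced difference map on $Sym^{d_1, d_2}(X) = Sym^{d_1}(X) \times Sym^{d_2}(X)$ (the base-point contributions canceling whenever $d_1 = d_2$, as is forced by $\kappa$ landing in $CH_0(X)_{\deg = 0}$) then realizes $\Psi_{d_1, d_2}$ as a morphism of algebraic varieties.

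Second, I would unpack the defining diagram for regularity of $\kappa$: there exist regular morphisms of varieties $f: Y \to Sym^{d_1, d_2}(X)$ and $g: Y \to Z$, with $g$ surjective, satisfying $\kappa \circ g = \theta_{d_1, d_2}^X \circ f$. Composing both sides with $alb_X$ on the left gives
\[
    alb_X \circ \kappa \circ g \;=\; \Psi_{d_1, d_2} \circ f \;:\; Y \to Alb(X),
\]
which by the first step is a morphism of algebraic varieties. Because $alb_X \circ \kappa$ is already well-defined set-theoretically on $Z$, this morphism is automatically constant on the set-theoretic fibers of $g$.

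The final step is to descend $\Psi_{d_1, d_2} \circ f$ along $g$ to produce $alb_X \circ \kappa$ as a morphism $Z \to Alb(X)$, and I expect this to be the main obstacle: the definition of regular map only guarantees that $g$ is a surjective morphism of varieties, with no a priori flatness or geometric-connectedness hypothesis on its fibers. I would handle this by first reducing to the normal case via normalization (which does not affect the regularity of a map to an abelian variety), and then either restricting $g$ to a dense open subset $U \subset Z$ over which $g$ is flat by generic flatness and invoking faithfully flat descent of the morphism to $Alb(X)$, or applying a rigidity-type argument for morphisms to abelian varieties to conclude that the fiber-contracting morphism $\Psi_{d_1, d_2} \circ f$ factors through $Z$. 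Properness and separatedness of $Alb(X)$ then extend the descended morphism from the open locus to all of $Z$, giving the desired regular map $alb_X \circ \kappa : Z \to Alb(X)$ of algebraic varieties.
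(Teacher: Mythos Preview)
Your first two steps agree with the paper: introduce $\tau := alb_X \circ \theta_{d_1,d_2}^X$ as a genuine morphism of varieties, and use the witnessing diagram to obtain $(alb_X \circ \kappa) \circ g = \tau \circ f$ on $Y$. The divergence is in how you descend along the surjection $g$. The paper (following Ro\u{i}tman) does not attempt flat descent or extension from an open set at all; it simply forms the product morphism
\[
g \times (\tau \circ f) \;:\; Y \longrightarrow Z \times Alb(X)
\]
and observes that its image is exactly the graph of the set-theoretic map $alb_X \circ \kappa$, so that this image already exhibits $alb_X \circ \kappa$ as a regular map $Z \to Alb(X)$.

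Your proposed resolution of the descent step has a gap. Properness and separatedness of $Alb(X)$ do not by themselves extend a morphism from a dense open $U \subset Z$ to all of $Z$: the relevant extension theorem (that a rational map to an abelian variety is everywhere defined) requires the source to be nonsingular, and passing to the normalization $\tilde{Z}$ does not help afterwards, since a morphism $\tilde{Z} \to Alb(X)$ need not descend back to $Z$. The rigidity alternative likewise requires proper connected fibers of $g$, which are not part of the hypotheses. So you have correctly located the obstacle, but the fixes you list do not close it; the graph construction in the paper is the standard way around.
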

        
    \subsection{Representable}
        \begin{definition}[Representability\index{representable}]
            $CH_0(X)_{\deg=0}$ is \textit{representable} if the natural map $\theta_d^X:Sym^d(X) \times Sym^d(X) \to CH_0(X)_{\deg=0}$ is surjective for sufficiently large $d$ (see \cite[Def. 10.6]{CVoi2}).
        \end{definition}
        
        \begin{remark}
            In words, $CH_0(X)_{\deg=0}$ is representable if and only if $CH_0(X)_{\deg=0}$ is finite-dimensional.
        \end{remark}

        We will study the nature of what happens when $CH_0(X)_{\deg=0}$ is finite-dimensional and write an alternative definition of representability (Corollary 6.11 below). In particular, we have this very strong statement that hints at the alternative definition.

        \begin{theorem}
            Let $X$ be a smooth projective variety over an algebraically closed field $k$. Then the map $CH_0(X)_{\deg=0} \to Alb(X)$ induces an isomorphism on torsion prime to the characteristic of $k$.
        \end{theorem}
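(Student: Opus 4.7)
My plan is to prove, for each prime $\ell \neq p = \text{char}(k)$, that $alb_X$ induces an isomorphism on $\ell^n$-torsion for every $n \ge 1$; passing to the direct limit over $n$ and over such primes then yields the theorem. This is Bloch's strategy in \cite[Theorem 5.1]{SBLec}, which extends Ro\u{\i}tman's theorem in characteristic zero and complements Milne's separate $p$-primary result.

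The key geometric input is an \'{e}tale cover constructed as follows. Fix an Albanese morphism $u: X \to Alb(X)$ (depending on a choice of base point) and pull back the isogeny $[\ell^n]: Alb(X) \to Alb(X)$ along $u$ to obtain a connected \'{e}tale Galois cover $\pi_n: X_n \to X$ with group $G_n := Alb(X)[\ell^n] \cong (\mathbb{Z}/\ell^n)^{2g}$, where $g = \dim Alb(X)$. Surjectivity of $CH_0(X)_{\deg=0}[\ell^n] \to Alb(X)[\ell^n]$ then follows from the snake lemma applied to the short exact sequence $0 \to \ker(alb_X) \to CH_0(X)_{\deg=0} \to Alb(X) \to 0$ together with multiplication by $\ell^n$: since $[\ell^n]$ is surjective on $Alb(X)$, one needs only $\ell^n$-divisibility of elements of $\ker(alb_X)$, which is produced by pushing forward carefully chosen $0$-cycles along $\pi_{n*}: CH_0(X_n) \to CH_0(X)$.

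For injectivity, suppose $z \in CH_0(X)_{\deg=0}$ satisfies $alb_X(z) = 0$ and $\ell^n z = 0$. The \'{e}tale cycle class map sends $z$ to a class in $H^{2d}_{\acute{e}t}(X, \mu_{\ell^n}^{\otimes d})$, where $d = \dim X$; since $\ell^n z = 0$, this class lifts via a Bockstein boundary to a class in $H^{2d-1}_{\acute{e}t}(X, \mu_{\ell^n}^{\otimes d})$. By Poincar\'{e} duality for $\ell$-adic \'{e}tale cohomology (valid in our setting by lifting to characteristic zero through $W(k)$ and using \cite[Theorems 20.5 and 21.1]{JMilLec} as in \autoref{sec:LF}), the latter group is canonically identified with $\hom(H^1_{\acute{e}t}(X, \mathbb{Z}/\ell^n), \mathbb{Z}/\ell^n) \cong Alb(X)[\ell^n]$ via Kummer theory on $Pic^0(X)$. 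One then checks that the Bockstein lift of the cycle class of $z$ coincides with $alb_X(z)$, and the hypothesis $alb_X(z) = 0$ forces $z = 0$.

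The main obstacle is the compatibility claim at the heart of the injectivity step: the Bockstein boundary of the cycle class of $z$ must agree, under the Poincar\'{e} duality identification, with the Albanese class $alb_X(z)$. This rests on a careful matching of the Kummer connecting map with the geometry of the $[\ell^n]$-covers $\pi_n$, and it is exactly where the hypothesis $\ell \neq p$ enters essentially: the Kummer sequence $0 \to \mu_{\ell^n} \to \mathbb{G}_m \xrightarrow{\ell^n} \mathbb{G}_m \to 0$ is \'{e}tale-exact only when $\ell$ is invertible in $k$. The complementary $p$-primary case requires the fundamentally different flat and crystalline techniques used by Milne.
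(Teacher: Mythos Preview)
The paper does not prove this theorem at all: its entire proof is the citation ``see \cite[Theorem 5.1]{SBLec}''. So there is no comparison of approaches to make; the result is imported wholesale as a black box. What you have written is an attempted outline of Bloch's argument itself, and that outline has a real gap.

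The problem is in your injectivity step. You take $z \in CH_0(X)_{\deg=0}$ with $\ell^n z = 0$ and $alb_X(z)=0$, produce a Bockstein lift of its cycle class to $H^{2d-1}_{\acute{e}t}(X,\mu_{\ell^n}^{\otimes d})$, identify that group with $Alb(X)[\ell^n]$ via Poincar\'e duality and Kummer theory, and observe that under this identification the lift equals $alb_X(z)=0$. But from this you conclude $z=0$, and that inference is exactly the hard part you have not justified. What you have constructed is Bloch's map $\lambda: CH_0(X)_{\deg=0}[\ell^n] \to H^{2d-1}_{\acute{e}t}(X,\mu_{\ell^n}^{\otimes d})$ and checked that it factors through $alb_X$ on the target side; showing that $\lambda$ is \emph{injective} is the substantive content of Bloch's theorem, and it does not follow from anything you have written. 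In Bloch's actual proof this requires a genuine geometric argument (moving the $0$-cycle onto a curve, using Ro\u{\i}tman's rigidity-type input, and analysing correspondences), none of which appears in your sketch. As written, your argument is circular: it shows that if $\lambda$ is injective then the theorem holds, but $\lambda$ being injective \emph{is} the theorem.

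A smaller point: Poincar\'e duality for $\ell$-adic \'etale cohomology of a smooth projective variety over an algebraically closed field holds unconditionally (SGA~4, or \cite[VI.11]{JMilEtal}); invoking the lift to $W(k)$ and the comparison theorems from \autoref{sec:LF} is unnecessary here and muddies the logical dependencies, since Bloch's theorem is one of the inputs the paper uses rather than something the paper's lifting machinery is meant to establish.
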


        \begin{proof}
            see \cite[Theorem 5.1]{SBloch}
        \end{proof}

        \begin{corollary}
            The group $CH_0(X)_{\deg=0}$ is representable if and only if $alb_X:CH_0(X)_{\deg=0} \to Alb(X)$ is an isomorphism.
        \end{corollary}
        
        \begin{lemma}\label{inverse}
            The fiber $(\theta_d^X)^{-1}(0)$ of the map $\theta_d^X:Sym^d(X) \times Sym^d(X) \to CH_0(X)_{\deg=0}$ is a countable union of closed algebraic subsets of $Sym^d(X) \times Sym^d(X)$.
        \end{lemma}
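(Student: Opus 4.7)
The plan is to recognise $(\theta_d^X)^{-1}(0)$ as the set $\{(A,B) \in Sym^d(X) \times Sym^d(X) : A \sim_{rat} B\}$, since $\theta_d^X(A,B) = [A-B] = 0$ in $CH_0(X)_{\deg=0}$ iff $A \sim_{rat} B$. This is precisely the set called ``$W^{n,0}$'' that is handled as the first step in the proof of \autoref{Wdcclosed} (with $n = d$), so the result will follow by transcribing that argument.

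First, I would stratify the fiber. For each $k, p \in \mathbb{N}$, let $T_d^{k,p}$ be the set of pairs $(A,B)$ for which there exist $C_1, C_2 \in Sym^k(X)$ and $g \in \hom^p(\mathbb{P}^1, Sym^{d+k,k}(X))$ with $g(x_1) = (A+C_1, C_1)$ and $g(x_2) = (B+C_2, C_2)$. By the definition of rational equivalence of $0$-cycles, $(\theta_d^X)^{-1}(0) = \bigcup_{k,p \ge 0} T_d^{k,p}$. Each $T_d^{k,p}$ is the image under $pr_{2,4}$ of a fibre product of $\hom^p(\mathbb{P}^1, Sym^{d+k,k}(X))$ with $Sym^{d,k,d,k}(X)$ over $Sym^{d+k,k,d+k,k}(X)$, hence the image of a quasi-projective variety, and in particular constructible.

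The main obstacle is to verify $\overline{T_d^{k,p}} \subset (\theta_d^X)^{-1}(0)$, so that the fiber equals the genuine countable union of closed subvarieties $\bigcup_{k,p} \overline{T_d^{k,p}}$. This is handled by the same argument as in the proof of \autoref{Wdcclosed}: given $(A,B) \in \overline{T_d^{k,p}} \setminus T_d^{k,p}$, choose an affine curve inside $T_d^{k,p}$ whose projective closure passes through $(A,B)$, lift it to a curve in $\hom^p(\mathbb{P}^1, Sym^{d+k,k}(X))$, normalise to obtain a smooth projective curve $Y$, and apply the decomposition theorem for rational maps of surfaces \cite[Theorem 5.5]{Hart} to resolve the resulting rational map $Y \times \mathbb{P}^1 \dashrightarrow Sym^{d+k,k}(X)$ into a composition of morphisms via a sequence of $\sigma$-processes. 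This produces a connected chain of rational lines linking $A$ and $B$, so $A \sim_{rat} B$ and the closure indeed lies in the fiber, finishing the proof.
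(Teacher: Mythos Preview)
Your proposal is correct and follows essentially the same approach as the paper: both identify $(\theta_d^X)^{-1}(0)$ with $W^{d,0}$ and then invoke the argument from the proof of \autoref{Wdcclosed} showing that $W^{d,0}$ is c-closed. The only difference is that the paper simply cites that earlier proof (together with \cite[Lemma 10.7]{CVoi2} and \cite[Lemma 3]{Mum}), whereas you have written out the details explicitly.
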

        
        \begin{proof}
            For a proof of this statement see \cite[Lemma 3]{Mum}, or \cite[Lemma 10.7]{CVoi2} over $\mathbb{C}$. It remains true in our scenario since by \autoref{Wdcclosed} we have that $W^d{d,0}$ is c-closed and the below equalities,
            \[
                \begin{array}{cc}
                    ({\theta_d^X})^{-1}(0) & = \{(A, B) \in Sym^{d,d}(X) : \theta_d^X(A, B) = 0\} \\
                     & = \{(A, B) \in Sym^{d,d}(X) : A - B \sim_{rat} 0\} \\
                     & = \{(A, B) \in Sym^{d,d}(X) : A \sim_{rat} B\} \\
                     & = W^{d, 0}.
                \end{array}
            \]
        \end{proof}

\section{A theorem on the Gysin kernel}
    \subsection{Part (a)}
        \begin{theorem}\label{parta}
            For every $t_0 \in U$, there is an abelian subvariety $A_{t_0}$ of $B_{t_0} \subset J_{t_0}$ such that
            \[
                G_{t_0} = \bigcup_{\text{countable}} \text{translates of }A_{t_0}.
            \]
        \end{theorem}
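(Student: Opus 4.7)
The plan is to proceed in two stages. First, I would establish, purely in positive characteristic, the existence of a distinguished abelian subvariety $A_{t_0}$ inside $J_{t_0}$ for which $G_{t_0}$ is a countable union of translates; this uses only the countability machinery of Section 7 together with the representability results of Section 9 and the identification $CH_0(C_{t_0})_{\deg=0}\cong J_{t_0}$. Second, I would show the containment $A_{t_0}\subset B_{t_0}$ by lifting to the geometric generic fiber over $\overline{K}$, invoking \cite[Theorem 5.1]{PauScho} there via Lefschetz's principle, and specializing back using the comparison theorems 20.5 and 21.1.

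For the first stage, via \autoref{alb} I identify $G_{t_0}$ with a subgroup of $J_{t_0}$. Picking $d$ larger than the genus of $C_{t_0}$ makes $\theta^{C_{t_0}}_{d,d}:Sym^{d,d}(C_{t_0})\to CH_0(C_{t_0})_{\deg=0}$ surjective, so the preimage of $G_{t_0}$ in $Sym^{d,d}(C_{t_0})$ equals
\[
\bigl(\theta^{\mc{S}_0}_{d,d}\circ Sym^{d,d}(r_{t_0})\bigr)^{-1}(0).
\]
This preimage is c-closed: $Sym^{d,d}(r_{t_0})$ is a regular morphism of varieties, and $(\theta^{\mc{S}_0}_{d,d})^{-1}(0)=W^{d,d}$ is c-closed by \autoref{Wdcclosed} (equivalently \autoref{inverse}). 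Applying \autoref{reg1} and \autoref{reg2} to $alb_{C_{t_0}}\circ\theta^{C_{t_0}}_{d,d}$, the image of this c-closed set in the projective variety $J_{t_0}$ is a countable union of Zariski closed subsets. Thus $G_{t_0}\subset J_{t_0}$ is a subgroup which admits a countable decomposition into closed subvarieties, and \autoref{irredundantAbelian} produces a unique irreducible c-component $A_{t_0}$ passing through $0$, which is automatically an abelian subvariety; translating by arbitrary elements of $G_{t_0}$ shows the other c-components are translates of $A_{t_0}$.

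For the second stage, I use the lift described in \hyperref[sec:notation]{Section 6.3}. Starting from $t_0$, I pick a corresponding $t_{\overline{\eta}}\in U_{\overline{\eta}}$ lying over it via the specialization morphisms $f_\mc{S}$, $f_\mc{C}$. Over $\overline{K}$, the field $\overline{K}$ is an algebraically closed field of characteristic zero of the same cardinality as $\mathbb{C}$, so by Lefschetz's principle the theorem of \cite[Theorem 5.1]{PauScho} supplies an abelian subvariety $A_{t_{\overline{\eta}}}\subset B_{t_{\overline{\eta}}}\subset J_{t_{\overline{\eta}}}$ with the same countable-translate property. Running the first stage of the current proof over $\overline{\eta}$, $A_{t_{\overline{\eta}}}$ is identified with the irreducible c-component through $0$ in $G_{t_{\overline{\eta}}}$. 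The two commutative diagrams at the end of \hyperref[sec:notation]{Section 6.3}, together with \autoref{fact3}, now transport everything down to the special fiber: Theorem 20.5 yields $H^1_{\acute{e}t}(C_{t_{\overline{\eta}}},\Lambda)\cong H^1_{\acute{e}t}(C_{t_0},\Lambda)$ and $H^3_{\acute{e}t}(\mc{S}_{\overline{\eta}},\Lambda)\cong H^3_{\acute{e}t}(\mc{S}_0,\Lambda)$ compatibly with the Gysin pushforward, so vanishing cohomologies match and $B_{t_{\overline{\eta}}}$ maps isomorphically onto $B_{t_0}$; similarly the specialization on Chow groups and on Jacobians sends $G_{t_{\overline{\eta}}}$ into $G_{t_0}$. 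Uniqueness in \autoref{irredundantAbelian} then identifies the image of $A_{t_{\overline{\eta}}}$ with $A_{t_0}$, forcing $A_{t_0}\subset B_{t_0}$.

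The main obstacle is the compatibility check in the second stage: namely, verifying that the specialization on Chow groups, the specialization on Jacobians $J(f_\mc{C}):J_{t_{\overline{\eta}}}\to J_{t_0}$, the Abel--Jacobi isomorphism $alb$, and the comparison isomorphisms on $H^1_{\acute{e}t}$ all fit into a single compatible diagram under which the vanishing-cohomology piece is preserved. This requires careful use of \autoref{fact3} and of the proper-base-change and smooth-base-change theorems for étale cohomology, since the Abel--Jacobi/cycle-map identification is what converts the cohomological statement $A_{t_{\overline{\eta}}}\subset B_{t_{\overline{\eta}}}$ (proved by Hodge-theoretic means over $\mathbb{C}$ in \cite{PauScho}) into the geometric statement about abelian subvarieties that we need. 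Once this compatibility is formalized, the theorem follows.
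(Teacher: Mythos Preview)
Your proposal is correct and follows essentially the same route as the paper. The first stage matches the paper's argument step for step (the commutative square with $Sym^{d,d}$, surjectivity of $\theta_d^{C_{t_0}}$, c-closedness of the preimage via \autoref{inverse}, properness of $alb_{C_{t_0}}\circ\theta_d^{C_{t_0}}$ via \autoref{reg1}--\autoref{reg2}, and then \autoref{irredundantAbelian}); for the second stage the paper does exactly what you outline---invoke the Hodge-theoretic inclusion $H^1(A_t)\subset H^1(B_t)$ from \cite{PauScho}, pass to \'etale cohomology over $\overline{K}$ by Lefschetz's principle and Theorem~21.1, and then descend to the special fiber by Theorem~20.5---only the paper states this in three sentences whereas you spell out the compatibility issues that must be checked.
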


        \begin{proof}
            For each natural number $d$, let $Sym^d(C_{t_0})$ be the $d$-th symmetric product of the curve, $Sym^d(S_0)$ the $d$-th symmetric product of the surface ${S}_0$, $Sym^d(r_{t_0}):Sym^d(C_{t_0}) \to Sym^d({S}_0)$ the morphism from the $d$-th symmetric product of the curve $C_{t_0}$ to the $d$-th symmetric product of the surface ${S}_0$, induced by the closed embedding $r_{t_0}:C_{t_0} \hookrightarrow {S}_0$, and
            \[ Sym^{d,d}(r_{t_0}):Sym^{d,d}(C_{t_0}) = Sym^d(C_{t_0}) \times Sym^d(C_{t_0}) \to Sym^{d,d}(S_0) = Sym^d({S}_0) \times Sym^d({S}_0). \]

            We obtain the following commutative diagram
                \[
                    \begin{tikzcd}
                        Sym^{d,d}(C_{t_0}) \arrow[r, "Sym^{d,d}(r_{t_0})"] \arrow[d, "\theta_d^{C_{t_0}}"] & Sym^{d,d}(S_0) \arrow[d, "\theta_d^{{S}_0}"] \\
                        CH_0(C_{t_0})_{\deg=0} \arrow[r, "r_{t_0*}"] \arrow[d, "alb_{C_{t_{0}}}", "\sim"'] & CH_0(S_0)_{\deg=0} \arrow[d, "alb_{{S}_{0}}"] \\
                        Alb(C_{t_{0}}) = J_{t_{0}} \arrow[r, "Alb(r_{t_{0}*})"] & Alb(S_{0}) = J_{{S}_{0}}
                    \end{tikzcd}
                  \]
            
            We know that the Albanese map $alb_{C_{t_{0}}}:CH_{0}(C_{t_{0}})_{\text{deg}=0} \to Alb(C_{t_{0}}) = J_{t_{0}}$ is an isomorphism (see \cite[Lemma 3.46]{PauScho}). Thus $CH(C_{t_{0}})_{\text{deg}=0}$ is representable, i.e. $\theta^{C_{t_{0}}}_{d}$ is surjective for $d \gg 0$. This implies that the Gysin kernel is of the form $G_{t_{0}} = \theta^{C_{t_{0}}}_{d}( \theta^{S_{0}}_{d} \circ Sym^{d,d}(r_{t_{0}}))^{-1}(0) $.

            Since by \autoref{inverse} $(\theta^{S_{0}}_{d})^{-1}(0)$ is a countable union of Zariski closed subsets in $Sym^{d,d}({S}_{0}) $ it follows that $(\theta^{{S}_{0}}_{d} \circ Sym^{d,d}(r_{t_{0}})) ^{-1}(0) $ is a countable union of Zariski closed subsets in $Sym^{d,d}(C_{t_{0}})$. Thus $alb_{C_{t_{0}}}(G_{t}) = alb_{C_{t_{0}}} \circ \theta^{C_{t_{0}}}_{d} [(\theta^{{S}_{0}}_{d} \circ Sym^{d,d}(r_{t_{0}}))^{-1}(0)] $ is a countable union of Zariski closed subsets in the abelian variety $J_{t_{0}}$.

            Since $alb_{C_{t_{0}}}(G_{t_{0}})$ is over an uncountable field, it admits an unique irredundant decomposition inside $J_{t_{0}}$ (see \autoref{uniqueIrred}). We identify $alb_{G_{t_{0}}}$ with $G_{t_{0}}$ and write $G_{t_{0}} = \bigcup_{n \in \mathbb{N}}(G_{t_{0}})_{n}$ for the irredundant decomposition of $G_{t_{0}}$ in $J_{t_{0}} \cong CH_{0}(C_{t_{0}})_{\text{deg}=0}$.

            On the other hand, note that by definition $G_{t_{0}}$ is a subgroup in $CH_{0}(C_{t_{0}})_{\text{deg}=0}$, hence its image $alb_{C_{t_0}}(G_{t_{0}})$ in $J_{t_{0}}$ via $alb_{C_{t_{0}}}$ is also a subgroup in $G_{t_{0}}$.

            Thus the irredundant decomposition of $G_{t_{0}}$ contains a unique irreducible component passing through $0$ which is an abelian subvariety of $J_{t_{0}}$ (see \autoref{irredundantAbelian}). After renumbering the components, we may assume that this component is $(G_{t_{0}})_{0}$.

            It is clear that for any $x \in G_{t_0}$, the set $x + (G_{t_0})_0$ is an irreducible Zariski closed subset in $G_{t_0}$, and that we can write $G_{t_0} = \bigcup_{x \in G_{t_0}} (x + (G_{t_0})_0)$.

            Ignoring each set $x + (G_{t_{0}})_{0} \subset y + (G_{t_{0}})_{0}$ for $x,y \in G_{t_{0}}$, we get a subset $\Xi \subset G_{t_{0}}$ such that $G_{t_{0}} = \cup_{x \in \Xi} (x + (G_{t_{0}})_{0})$ which is an irredundant decomposition of $G_{t_{0}}$.

            For any $x, y \in \Xi, x + (G_{t_0})_0$ and $y + (G_{t_0})_0$ are irreducible and not contained in one another. Since $x + (G_{t_0})_0$ is irreducible and $G_{t_0}$ is a subgroup, then $x + (G_{t_0})_0 \subset (G_{t_0})_n$ for some $n$, for otherwise creating a contradiction to \autoref{CountContradiction}. It follows that $(G_{t_0})_0 \subset -x + (G_{t_0})_n$. Similarly, we can prove that $-x + (G_{t_0})_n$ is contained in $(G_{t_0})_l$ for some $l \in \mathbb{N}$. Then $(G_{t_0})_0 = (G_{t_0})_l$, that is, $(G_{t_0})_0 \subset -x + (G_{t_0})_n \subset (G_{t_0})_0$, so $-x + (G_{t_0})_n = (G_{t_0})_0$, i.e. $x + (G_{t_0})_0 = (G_{t_0})_n$ for each $x \in \Xi$. It means that $\Xi$ is countable.

            Taking $A_{t_0} = (G_{t_0})_0$, until now we have proved that there is an Abelian variety $A_{t_0} \subset J_{t_0}$ such that $G_{t_0} = \bigcup_{x \in \Xi} (x + A_{t_0})$, where $\Xi \subset G_{t_0}$ is a countable subset. Equivalently, we can write as follows: there is an Abelian variety $A_{t_0} \subset J_{t_0}$ such that $G_{t_0} = \bigcup_{\text{countable}} \text{ translates of } A_{t_0}$.
            
            In \cite{PauScho}, the authors prove using Hodge theory that
            $H^1(A_{t_0,\mathbb{C}},\mathbb{Z}) \subset H^1(B_{t_0,\mathbb{C}},\mathbb{Z})$,
            from which they conclude that $A_{t_0} \subset B_{t_0}$.
            By Artin’s comparison theorem \cite[Theorem 21.1]{JMilLec}, the first inclusion may be translated into an inclusion of étale cohomology groups of the corresponding complex varieties. Using a lift to characteristic zero together with the specialization comparison theorem \cite[Theorem 20.5]{JMilLec}, we obtain an inclusion of Galois representations
            \[
                H^1_{\acute{e}t}(A_{t_0,\overline{k}},\mathbb{Q}_\ell)
                \subset
                H^1_{\acute{e}t}(B_{t_0,\overline{k}},\mathbb{Q}_\ell).
            \]

            By Tate’s isogeny theorem for abelian varieties over finitely generated fields in positive characteristic \cite{Zarhin}, the inclusion above determines, via the identification of algebraic endomorphisms with $\ell$-adic endomorphisms, an idempotent in $\mathrm{End}(B_{t_0})\otimes\mathbb{Q}_\ell$. As explained in \cite[page 7]{BanGul}, such an idempotent corresponds, up to isogeny, to an abelian subvariety of $B_{t_0}$ whose $\ell$-adic cohomology realizes the given subrepresentation. Since the above inclusion is compatible with the embeddings $A_{t_0} \subset J_{t_0}, B_{t_0}\subset J_{t_0}$, this abelian subvariety coincides with $A_{t_0}$, and therefore $A_{t_0}\subset B_{t_0}$.

        \end{proof}


    \subsection{Part (b)}
        From now on, we will only be working over an uncountable algebraically closed field $k$ of characteristic $p$ and thus we do not need to worry about the lift. Thus, from this point on, we will drop our notation of subfixing with $0, \eta$ and $\overline{\eta}$ from \autoref{sec:LF} and when using a subfix, it will now mean the fiber of a point over the base scheme as seen below.
        
        \subsubsection{Connection between generic geometric point and very general points}{\label{part.b.connection}}
            Let $k$ be an uncountable algebraically closed field of characteristic $p$. Let $T$ be an integral scheme over $k, \; \mathfrak{X}_T$ a scheme over $T$ and $X_t = f_T^{-1}(t)$ the fiber over $t \in T$ of the flat family $f_T:\mathfrak{X}_t \to T$.

            \begin{lemma}
                \label{ggp}
                Given an integral base $T$ over $k$ there exists a natural c-open subset $U_c$ in $T$ such that every $t \in U_c$ is scheme-theoretic isomorphic to the generic geometric point $\overline{\eta}$ of $T$ and given a flat family $f_T:\mathfrak{X}_T \to T$ over $T$, the above scheme-theoretic isomorphism of points induces isomorphisms between the fibers $X_t$ for all closed points $t \in U_c$, and the geometric generic fiber $X_{\overline{\eta}}$, as schemes over $Spec(\mathbb{F}_p)$, moreover these isomorphisms preserve rational equivalence of algebraic cycles.
            \end{lemma}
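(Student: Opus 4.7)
The plan is to first identify the residue field at $\overline{\eta}$ with $k$ via a cardinality/uniqueness argument, then spread out the geometric generic fiber along a generically finite cover of $T$ to match it with nearby closed fibers, and finally cut out $U_0$ by imposing countably many Zariski-closed conditions to ensure preservation of rational equivalence.

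First I would prove the residue-field identification. The residue field at the geometric generic point is $\overline{k(T)}$, the algebraic closure of the function field of $T$. Because $T$ is of finite type over the uncountable algebraically closed field $k$ of characteristic $p$, a standard cardinality count yields $|k(T)| = |\overline{k(T)}| = |k|$. Since both $\overline{k(T)}$ and $k$ are algebraically closed of characteristic $p$ of the same cardinality, the uniqueness theorem cited in Section 5.1 (\cite[Chapter VI \S 1]{HungAlg}) gives a non-canonical isomorphism $\overline{k(T)} \cong k$ of $\mathbb{F}_p$-algebras. For every closed point $t \in T$, the Nullstellensatz forces $k(t) = k$, and composing gives the required non-canonical scheme isomorphism $t \cong \overline{\eta}$ over $Spec(\mathbb{F}_p)$.

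Next I would transfer the identification to the family. Since $X_{\overline{\eta}} \to \overline{\eta}$ is of finite type, it is defined over a finitely generated subextension of $\overline{k(T)}/k(T)$, hence over a finite extension $L/k(T)$; so there is an integral $k$-variety $T'$ with function field $L$ and a dominant, generically finite morphism $T' \to T$, together with a family $\mathcal{X}_{T'} \to T'$ spreading $X_L$, whose pullback to the geometric generic point of $T'$ recovers $X_{\overline{\eta}}$. By constructibility of the isomorphism type of fibers of a finite-type morphism over a Noetherian base, there is a Zariski open $V' \subset T'$ over which every closed fiber $(\mathcal{X}_{T'})_{t'}$ is isomorphic, as an $\mathbb{F}_p$-scheme, to $X_{\overline{\eta}}$ (via the already-fixed identification $\overline{k(T)} \cong k$). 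Letting $V \subset T$ be the image of $V'$, which is open after further shrinking so that $T' \to T$ is finite over $V$, one obtains the isomorphism $X_t \cong X_{\overline{\eta}}$ over $\mathbb{F}_p$ for every closed $t \in V$. To refine $V$ to a c-open $U_0$ compatible with rational equivalence, recall that by \autoref{Wdcclosed}, for each $(d_1,d_2)$ the relation $\sim_{\text{rat}}$ on $Sym^{d_1,d_2}(X_{\overline{\eta}})$ is c-closed, hence described by a countable family of Zariski closed subsets whose defining data spread out to relative loci on $\mathcal{X}_{T'} \times_{T'} \mathcal{X}_{T'}$ over a Zariski open of $T'$. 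Removing from $V$ the countably many Zariski closed subsets where these relative loci fail to specialize faithfully to their generic versions yields the c-open $U_0$, which is nonempty by \autoref{CountContradiction} and the uncountability of $k$.

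The main obstacle is the coordinated spreading-out of \emph{all} algebraic cycles and \emph{all} rational equivalences simultaneously: a single cycle on $X_{\overline{\eta}}$ spreads to a relative cycle on $\mathcal{X}_{T'}$ over some open, but there are countably many cycles to track (indexed by the $d_i$ and by the irreducible components of $Sym^{d_1,d_2}$), and countably many curves witnessing rational equivalences, so the exceptional loci only union to a c-closed subset. Only because $k$ is uncountable does the complement remain nonempty. A secondary subtlety is that the fiber identification $X_t \cong X_{\overline{\eta}}$ must be read over $\mathbb{F}_p$ (not over $T$), once the abstract field isomorphism $\overline{k(T)} \cong k$ has been fixed; the specialization maps of \cite[Corollary 20.3]{FulInt} are the natural bridge that makes this identification preserve cycle-theoretic structure, and it is these maps that one ultimately invokes to deduce compatibility with rational equivalence.
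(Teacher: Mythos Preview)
Your proposal has a genuine gap at the ``constructibility of the isomorphism type of fibers'' step. There is no such constructibility theorem: properties like dimension or smoothness are constructible on the base, but the abstract $\mathbb{F}_p$-isomorphism class of a fiber is not a constructible condition, and in any case what you need is not merely the \emph{existence} of an isomorphism $(\mathcal{X}_{T'})_{t'} \cong X_{\overline{\eta}}$ for each $t'$ but an explicit one controlled by the family. Your single fixed identification $\sigma : \overline{k(T)} \cong k$ cannot supply this: pulling $X_{\overline{\eta}}$ back along $\sigma$ produces one fixed $k$-scheme, and there is no reason it should be $k$-isomorphic to the closed fiber $(\mathcal{X}_{T'})_{t'}$ --- any such $\mathbb{F}_p$-isomorphism must involve a further field automorphism of $k$ depending on $t'$, which you never construct.

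The paper's argument supplies exactly this missing $t$-dependence, and by a structurally different route. Instead of a finite cover $T' \to T$, one descends $T$ to a model $T'$ over a \emph{countable} algebraically closed subfield $k' \subset k$ (the algebraic closure of $\mathbb{F}_p$ adjoined with the finitely many coefficients defining $T$), so that $T = T' \times_{k'} k$. The c-open $U_0$ is then the complement of the pullbacks to $T$ of \emph{all} proper closed subschemes of $T'$; countability of $k'$ makes this a countable union, so $U_0$ is genuinely c-open. The payoff is that every closed $t \in U_0$ maps to the \emph{generic point} $\eta'$ of $T'$ under the projection $T \to T'$, yielding a ring map $\epsilon_t : k'(T') \to k$ with $\epsilon_t(f) = f(t)$. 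A Noether-normalization and transcendence-basis argument then extends $\epsilon_t$ to an isomorphism $e_t : \overline{k(T)} \xrightarrow{\sim} k$ still satisfying $e_t(f) = f(t)$ for $f$ in the countable model. It is this evaluation property --- absent from your abstract cardinality isomorphism --- that makes the induced fiber isomorphism $\kappa_t : X_t \xrightarrow{\sim} X_{\overline{\eta}}$ compatible with the family, and from which preservation of rational equivalence follows (the paper refers to \cite[Lemma 19]{BanGul}). Your strategy of removing countably many Zariski-closed loci to force cycle-theoretic compatibility is therefore both unnecessary and misplaced: once $e_t$ is built correctly, $\kappa_t$ is an honest scheme isomorphism and carries rational equivalence automatically.
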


            \begin{proof}
                The proof can be found in \cite[Lemma 5.4]{PauScho} except that we work over the primary subfield $\mathbb{F}_{p}$ instead of $\mathbb{Q}$. However, the proof is still true due to 
                the fact that the transcedental degree $[k : \mathbb{F}_{p}]$ is infinite.
            \end{proof}

            Thus, for any integral scheme $T$ over $k$ and for any morphism $T \to (\mathbb{P}^{d})^{\vee}$, let $f_{T}:C_{T}\to T$ be the family of hyperplane sections of $S$ (remember $S = S_{0}$) parameterized by $T$, $g_{T}:S_{T} \to T$ the family such that each fiber over $T$ is isomorphic to $S$ and

           \[
               \begin{tikzcd}
                 C_{T} \arrow[rr, "r_{T}"] \arrow[dr, "f_{T}"] & & S_{T} \arrow[dl, "g_{T}"] \\
                                                              & T &
               \end{tikzcd}
           \]
          
           the closed embeddings of schemes over $T$. Then we also have the closed embedding $r_{0}$ and $r_{\overline{\eta}}$ over $\{0\} = Spec(k)$ and the generic geometric point $\{\overline{\eta}\} = Spec(\overline{k(T)})$, where $k(T)$ is the function field of T.

           By \autoref{ggp} above, there exists a natural c-open subset $U_c$ in $T$ such that the residue field of any closed point in $U_c$ is isomorphic to the residue field of the geometric generic point of $T$. Thus we get the following commutative diagram

          \[
              \begin{tikzcd}
                   & Sym^{d,d}(C_{t}) \arrow[r, "Sym^{d,d}(\kappa_{t}^{f_{T}})"] \arrow[d, "\theta_{d}^{C_{t}}"] & Sym^{d,d}(C_{\overline{\eta}}) \arrow[d, "\theta_{d}^{C_{\overline{\eta}}}"] & \\
                   & CH_{0}(C_{t})_{\text{deg}=0} \arrow[r, "\kappa_{t*}^{f_{T}}", "\sim"'] \arrow[d, "alb_{C_{t}}", "\sim"'] \arrow[ddl, "r_{t*}"] & CH_{0}(C_{\overline{\eta}})_{\text{deg}=0} \arrow[d, "alb_{C_{\overline{\eta}}}", "\sim"'] \arrow[ddr, "r_{\overline{\eta}*}"] & \\
                   & J_{t} \arrow[r, "l_{t}"] & J_{\overline{\eta}} & \\
                CH_{0}(S_{t})_{\text{deg}=0} \arrow[rrr, "\kappa_{t*}^{g_{T}}", "\sim"'] & & &  CH_{0}({S}_{\overline{\eta}})_{\text{deg}=0}
              \end{tikzcd}
          \]
          where for every $t \in U_c$, $\kappa_{t}^{f_{T}}$ and $\kappa_{t}^{g_{T}}$ are isomorphisms as schemes over $Spec(\mathbb{F}_{p})$. We define $l_{t} = alb_{C_{\overline{\eta}}} \circ \kappa_{t*}^{f_{T}} \circ alb_{{C_{t}}}^{-1}$ which is a regular morphism.
    
            \begin{lemma}\label{ell}
                For any closed point $t \in U_c,$ we have $l_{t}(B_{t}) = B_{\overline{\eta}}$ and $l_{t}(A_{t}) = A_{\overline{\eta}}$.
            \end{lemma}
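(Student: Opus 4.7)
The plan is to transport $G_t$, $A_t$, and $B_t$ from the fiber over $t$ to the geometric generic fiber via the scheme-theoretic isomorphisms $\kappa_t^{f_T}$ and $\kappa_t^{g_T}$, and then invoke the uniqueness statements from the countability and Jacobian machinery assembled in the earlier sections.

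For the statement $l_{t}(A_{t}) = A_{\overline{\eta}}$, the first step is to unpack the definition of $l_t$: by construction it is the composition $alb_{C_{\overline{\eta}}} \circ \kappa_{t*}^{f_T} \circ alb_{C_t}^{-1}$. Because the Chow-group square just displayed commutes and $\kappa_{t*}^{g_T}$ is an isomorphism, $\kappa_{t*}^{f_T}$ restricts to an isomorphism $\ker(r_{t*}) \xrightarrow{\sim} \ker(r_{\overline{\eta}*})$; that is, it sends $G_t$ bijectively onto $G_{\overline{\eta}}$. Translating via the Albanese isomorphisms, this means $l_t(G_t) = G_{\overline{\eta}}$ inside $J_{\overline{\eta}}$. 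By \autoref{parta}, $A_t$ is the unique irreducible component through $0$ in the irredundant decomposition of $G_t$ and is an abelian subvariety of $J_t$; since $l_t$ is a group isomorphism fixing $0$, its image $l_t(A_t)$ is an irreducible abelian subvariety of $J_{\overline{\eta}}$ contained in $G_{\overline{\eta}}$ and containing $0$. Applying \autoref{irredundantAbelian} to the subgroup $G_{\overline{\eta}} \subset J_{\overline{\eta}}$, such a component is unique, whence $l_t(A_t) = A_{\overline{\eta}}$.

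For $l_{t}(B_{t}) = B_{\overline{\eta}}$, the argument is cohomological. The scheme-theoretic isomorphisms $\kappa_t^{f_T}: C_t \xrightarrow{\sim} C_{\overline{\eta}}$ and $\kappa_t^{g_T}: \mc{S}_t \xrightarrow{\sim} \mc{S}_{\overline{\eta}}$ fit in the commutative square with $r_t$ and $r_{\overline{\eta}}$ established earlier in this section. Passing to étale cohomology yields a commutative square intertwining the Gysin homomorphisms $r_{t*}$ and $r_{\overline{\eta}*}$ on $H^1_{\acute{e}t}(-, \Lambda)$, whence the vanishing subspaces $H^1_{\acute{e}t}(C_t, \Lambda)_{van}$ and $H^1_{\acute{e}t}(C_{\overline{\eta}}, \Lambda)_{van}$ correspond under the induced cohomological isomorphism. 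Using \autoref{fact3} to identify $H^1_{\acute{e}t}(J_t, \mathbb{Q}_l)$ with $H^1_{\acute{e}t}(C_t, \mathbb{Q}_l)$ (and likewise over $\overline{\eta}$), together with the functoriality of the $J^1$-construction on such sub-modules, the isomorphism $l_t$ sends $B_t = J^1(H^1_{\acute{e}t}(C_t, \Lambda)_{van})$ onto $B_{\overline{\eta}} = J^1(H^1_{\acute{e}t}(C_{\overline{\eta}}, \Lambda)_{van})$.

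The main obstacle I expect is the bookkeeping required to reconcile the two descriptions of $l_t$: the Chow-theoretic one (push-forward along $\kappa_t^{f_T}$ followed by Albanese) and the cohomological one (functoriality of the Jacobian applied to the isomorphism $\kappa_t^{f_T}$). Once these two incarnations of $l_t$ are aligned using Albanese functoriality together with \autoref{fact3}, the rest is a direct diagram chase combined with the uniqueness part of \autoref{irredundantAbelian}.
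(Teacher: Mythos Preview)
The paper does not give its own argument here; it simply cites \cite[Proposition 20]{BanGul}. Your outline is essentially the strategy that reference carries out, so there is no genuinely different route to compare against. That said, there is one soft spot worth tightening.

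In the $A_t$ part you write: ``$l_t(A_t)$ is an irreducible abelian subvariety of $J_{\overline{\eta}}$ contained in $G_{\overline{\eta}}$ and containing $0$. Applying \autoref{irredundantAbelian} \ldots\ such a component is unique, whence $l_t(A_t) = A_{\overline{\eta}}$.'' But \autoref{irredundantAbelian} only says that the c-component of $G_{\overline{\eta}}$ through $0$ is unique and is an abelian subvariety; it does \emph{not} say that every abelian subvariety of $J_{\overline{\eta}}$ through $0$ lying in $G_{\overline{\eta}}$ equals $A_{\overline{\eta}}$ (e.g.\ $\{0\}$ is such a subvariety). What you actually need is that $l_t$ carries the irredundant decomposition of $G_t$ to that of $G_{\overline{\eta}}$. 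This follows because $l_t$ is a regular isomorphism of schemes over $\mathbb{F}_p$ (as established just before the lemma), hence a homeomorphism sending irreducible Zariski closed subsets to irreducible Zariski closed subsets; combined with the uniqueness in \autoref{uniqueIrred}, the image of the c-component $A_t$ through $0$ is the c-component $A_{\overline{\eta}}$ through $0$. Once you insert this step, the argument is complete.

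For $B_t$ your cohomological transport is the right idea, and you have correctly identified the one genuine verification left: that the isomorphism $J_t \to J_{\overline{\eta}}$ obtained functorially from $\kappa_t^{f_T}$ on cohomology coincides with the map $l_t$ defined via Chow groups and Albanese. This is exactly the compatibility encoded in the large commutative diagram assembled just before the lemma (together with \autoref{fact3}), so the bookkeeping is already in place.
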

    
            \begin{proof}
                See \cite[Proposition 20]{BanGul}.
            \end{proof}
    
            Thus, when studying the varieties $A_{t}$ in a family, we can either work at the level of the geometric generic point or at a very general closed point on the base scheme. Another good reference for the above section is \cite{vial}.
    
        \subsubsection{Relation between \texorpdfstring{$A_{t}$}{Lg} and \texorpdfstring{$B_{t}$}{Lg}}
            Let $L$ be a (very) ample line bundle on $J_{t}$ and $i:A_{t} \to J_{t}$ 
            the closed embedding. Let $L_{t}$ be the pull-back of $L$ to $A_{t}$ under the embedding $i$. Then via the following commutative diagrams, define the homomorphism $\zeta$ on divisors and similarly the homomorphism $\zeta_{\mathbb{Z}_{l}}$ on cohomology
            \[
                \begin{tikzcd}
                    CH_0(A_{t})_{\deg=0} \arrow[r, "\zeta"] \arrow[d, "(\lambda_{L_{t}})_*"] & CH_0(J_{t})_{\deg=0} \\
                    CH_0(A_{t}^\vee)_{\deg=0} \arrow[r, "i^{\vee*}"] & CH_0(J_{t}^\vee)_{\deg=0} \arrow[u, "\lambda_L^*"],
                \end{tikzcd}
                \begin{tikzcd}
                    H_{\acute{e}t}^1(A_{t}, \mathbb{Z}_{\ell}) \arrow[r, "\zeta_{\mathbb{Z}_{\ell}}"] \arrow[d, "(\lambda_{L_{t}})_*"] & H_{\acute{e}t}^1(J_{t}, \mathbb{Z}_{\ell}) \\
                    H_{\acute{e}t}^1(A_{t}^\vee, \mathbb{Z}_{\ell}) \arrow[r, "i^{\vee*}"] & H_{\acute{e}t}^1(J_{t}^\vee, \mathbb{Z}_{\ell}) \arrow[u, "\lambda_L^*"]
                \end{tikzcd}
            \]
            Analogously we do the same for $\zeta_{\mathbb{Q}_{\ell}/\mathbb{Z}_{\ell}}:H_{\acute{e}t}^1(A_{t}, \mathbb{Q}_{\ell}/\mathbb{Z}_{\ell}) \to H_{\acute{e}t}^1(J_{t},\mathbb{Q}_{\ell}/ \mathbb{Z}_{\ell})$.
            Thus, we get the injective homomorphisms $\zeta_{\mathbb{Q}_{\ell}}:H_{\acute{e}t}^1(A_{t}, \mathbb{Q}_{\ell}) = H_{\acute{e}t}^1(A_{t}, \mathbb{Z}_{\ell}) \otimes \mathbb{Q}_{\ell} \to H_{\acute{e}t}^1(J_{t}, \mathbb{Q}_{\ell}) = H_{\acute{e}t}^1(J_{t}, \mathbb{Z}_{\ell}) \otimes \mathbb{Q}_{\ell}$ and $\zeta_{\mathbb{Z}_{\ell}} \otimes \mathbb{Q}_{\ell}/\mathbb{Z}_{\ell}:H_{\acute{e}t}^1(A_{t}, \mathbb{Z}_{\ell}) \otimes \mathbb{Q}_{\ell} / \mathbb{Z}_{\ell} \to H_{\acute{e}t}^1(J_{t}, \mathbb{Z}_{\ell}) \otimes \mathbb{Q}_{\ell} / \mathbb{Z}_{\ell}$ induced by $\zeta_{\mathbb{Z}_{\ell}}$.

            \begin{proposition}
                The image of the composition
                \[
                    H_{\acute{e}t}^1(A_{t}, \mathbb{Q}_{\ell}(1-p)) \xrightarrow{\zeta_{\mathbb{Q}_{\ell}}} H_{\acute{e}t}^1(J_{t}, \mathbb{Q}_{\ell}(1-p)) \xrightarrow{w_*} H_{\acute{e}t}^{1}(C_{t}, \mathbb{Q}_{\ell})
                \]
                is contained in the kernel of the Gysin homomorphism
                \[
                    H_{\acute{e}t}^1(C_{t}, \mathbb{Q}_{\ell}) \xrightarrow{r_{{t}*}} H_{\acute{e}t}^3(S, \mathbb{Q}_{\ell})
                \]
            \end{proposition}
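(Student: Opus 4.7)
The plan is to combine a naturality factorization with the defining property of $B_t$ as the Jacobian of the vanishing cohomology, transporting the identification of \cite{PauScho} to the characteristic-$p$ setting via the lift from Section~\ref{sec:notation} together with the two comparison theorems.

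First I observe that since by \autoref{parta} we have $A_t \subset B_t \subset J_t$ as abelian subvarieties, the construction of $\zeta_{\mathbb{Q}_l}$ is natural with respect to this tower of closed embeddings: if we pick the ample line bundle $L$ on $J_t$ and restrict it successively to $B_t$ and $A_t$, then the diagram defining $\zeta$ on $A_t \hookrightarrow J_t$ factors through the analogous diagram for $B_t \hookrightarrow J_t$. Consequently, $\zeta_{\mathbb{Q}_l}^{A_t \to J_t}$ equals $\zeta_{\mathbb{Q}_l}^{B_t \to J_t}$ precomposed with the induced map $H^1_{\acute{e}t}(A_t, \mathbb{Q}_l(1-p)) \to H^1_{\acute{e}t}(B_t, \mathbb{Q}_l(1-p))$, and the image of $w_* \circ \zeta_{\mathbb{Q}_l}$ starting from $H^1_{\acute{e}t}(A_t)$ is contained in the image of $w_* \circ \zeta_{\mathbb{Q}_l}^{B_t \to J_t}$.

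Next I would identify this latter image with the vanishing cohomology. By definition $B_t = J^1(H^1_{\acute{e}t}(C_t, \Lambda)_{van})$, so over $\mathbb{C}$ the identification $w_* : H^1(B_t) \xrightarrow{\sim} H^1(C_t)_{van}$ is essentially tautological from Hodge theory and appears in \cite{PauScho}. To move it to positive characteristic, I would invoke the lift: the family $\mc{S}$ over $W(k)$ yields compatible intermediate Jacobians, vanishing-cohomology subvarieties, and the morphism $w$ on both the geometric generic and special fibers. Theorem~21.1 converts the singular-cohomology identification over $\mathbb{C}$ into an \'etale-cohomology one, and Theorem~20.5 transports it via specialization to an isomorphism $w_* : H^1_{\acute{e}t}(\mc{B}_{t_{\overline{\eta}}}, \mathbb{Q}_l(1-p)) \xrightarrow{\sim} H^1_{\acute{e}t}(C_{t_{\overline{\eta}}}, \mathbb{Q}_l)_{van}$ that descends to the characteristic-$p$ side. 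Combining with the factorization step, the image of $w_* \circ \zeta_{\mathbb{Q}_l}$ on $H^1_{\acute{e}t}(A_t)$ lies in $H^1_{\acute{e}t}(C_t, \mathbb{Q}_l)_{van} = \ker(r_{t*})$, which is exactly the desired containment.

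The main obstacle I anticipate is the compatibility of the specialization isomorphism with the polarization data used to define $\zeta$. Concretely, one must check that an ample line bundle $L$ on $J_t$ can be chosen coherently across the $W(k)$-family so that the polarizations $\lambda_L$ and $\lambda_{L_t}$, together with the dual maps $i^\vee$ and $(\lambda_L)^*$, commute with the specialization isomorphisms of Theorem~20.5. This reduces to standard (but technical) facts about smooth proper families of abelian varieties: $H^1_{\acute{e}t}$, dual abelian varieties, and polarizations all specialize well. Once this compatibility is verified, the remainder of the argument is a diagram chase combining the factorization, the identification of $H^1(B_t)$ with the vanishing cohomology, and the definition of $H^1_{\acute{e}t}(C_t, \mathbb{Q}_l)_{van}$ as $\ker(r_{t*})$.
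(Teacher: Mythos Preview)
The paper does not give its own argument here: its entire proof is the citation ``see \cite[Proposition 14]{BanGul}.'' So the comparison is between your factorization-through-$B_t$ approach and the Banerjee--Guletski\u{i} argument that the paper imports.

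Your route is genuinely different. You reduce the statement for $A_t$ to the analogous statement for $B_t$ via the inclusion $A_t\subset B_t$ already established in \autoref{parta}, and then argue that $w_*\circ\zeta_{\mathbb{Q}_l}^{B_t\to J_t}$ has image $H^1_{\acute{e}t}(C_t,\mathbb{Q}_l)_{van}$ by transporting the tautological Hodge-theoretic identification from \cite{PauScho} through the lift and the two comparison theorems. The Banerjee--Guletski\u{i} argument, by contrast, does not pass through $B_t$ or through characteristic~$0$ at all: as one sees from the ingredients invoked in the proof of the subsequent lemma (the auxiliary curve $\Gamma$, the correspondence $Z$ on $\Gamma\times C_{\overline{\eta}}$, and the description of $\omega_*$ via $\Gamma\hookrightarrow J_\Gamma\to A$), their proof works directly from the Chow-theoretic definition of $A_t$ as the identity component of $G_t=\ker(r_{t*})$ and uses cycle-class and correspondence maps to pass to \'etale cohomology. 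That argument is intrinsic to the \'etale setting and characteristic-free.

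What your approach buys is conceptual economy once \autoref{parta} is in hand: the containment becomes essentially a restatement of $A_t\subset B_t$ on cohomology. What it costs is a second invocation of the lift and the need to check that the polarization-based map $\zeta_{\mathbb{Q}_l}^{B_t\to J_t}$ really realizes the inclusion of $H^1_{\acute{e}t}(C_t,\mathbb{Q}_l)_{van}$ inside $H^1_{\acute{e}t}(C_t,\mathbb{Q}_l)$ under $w_*$; you flag this compatibility yourself, and it is the only nontrivial point. The Banerjee--Guletski\u{i} route avoids both the lift and this compatibility check, at the price of setting up the correspondence machinery.
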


            \begin{proof}
                The proof can be found in \cite[Proposition 14]{BanGul},  where K. Banerjee and V. Guletski\u{i} 
                proved it over an uncountable algebraically closed field of characteristic zero. The proof is still viable in our new scenario over $k$ since the authors work with Bloch's $\ell$-adic Abel-Jacobi map and then with the canonical homomorphism
                \[
                    \lambda_\ell^p(V):CH^p(V)(\ell) \to H_{\acute{e}t}^p(V, \mathbb{Q}_\ell / \mathbb{Z}_\ell (p))
                \]
                where $V$ is a smooth projective variety over $k$ and $CH^p(V)(\ell)$ is the $\ell$-torsion subgroup (see \cite{SBloch79}). It is known that $\lambda_\ell^1$ is an isomorphism and the homomorphisms $\lambda_\ell^p$ are functorial with respect to the action of correspondences between smooth projective varieties over an algebraically closed field. By nature, $A_t$ and $J_t$ are autodual since they are principally polarized by the theta divisor and their N\'{e}ron-Severi groups are torsion free (see \cite[Corollary 10.18]{JMilAbl}). In \cite{GorGul12}, S. Gorchinskiy and   V. Guletski\u{i} 
                showed that the homomorphism
                \[
                    \rho_\ell^{i,j}:H_{\acute{e}t}^i(V, \mathbb{Z}_\ell(j)) \otimes \mathbb{Q}_\ell / \mathbb{Z}_\ell \to H_{\acute{e}t}^i(V, \mathbb{Q}_\ell / \mathbb{Z}_\ell (j))
                \]
                has finite kernel and cokernel over an algebraically closed field.
                
                The morphism of motives $\omega:M(J_t) \otimes \mathbb{L}^{p-1} \to M(C_t)$ induces the homomorphism $\omega_*:CH_0(J_t)_0 \to CH^p(C_t)_0$, where $M(-)$ is the functor from smooth projective varieties to (contravariant) Chow motives, $\mathbb{L}$ is the Lefschetz motive and $\mathbb{L}^n$ is the $n$-fold tensor product of $\mathbb{L}$. This induces the homomorphism $\omega_*:H_{\acute{e}t}^1(J_t, \mathbb{Q}_\ell(1-p)) \to H_{\acute{e}t}^{2p-1}(C_t, \mathbb{Q}_\ell)$.

                Composing all of these together gives us what we desire.
            \end{proof}

            Following the ideas from \cite[\S 6]{BanGul} (or more generally, see \cite[Chapter 3]{Freitag}) we consider the minimal subextension $L$ of $k(D)$ in $\overline{k(D)}$ where $D$ is a projective line in the dual space $(\mathbb{P}^d)^\vee$ such that the abelian varieties $A_{\overline{\eta}}, B_{\overline{\eta}}$ and $J_{\overline{\eta}}$ are defined over $L$. Additionally, we consider 
            $L = k(D')$ for an algebraic curve $D'$. Since now we have $A_{\overline{\eta}}, B_{\overline{\eta}}$ and $J_{\overline{\eta}}$ defined over $L$, their closed embeddings $A_{\overline{\eta}} \hookrightarrow B_{\overline{\eta}}$ and $B_{\overline{\eta}} \hookrightarrow J_{\overline{\eta}}$ are also defined over $L$. Thus, there exists a Zariski open subset $U'$ in $D'$ with spreads $\mc{A}_{\overline{\eta}}, \mc{B}_{\overline{\eta}}$ and $\mc{J}_{\overline{\eta}}$ of $A_{\overline{\eta}}, B_{\overline{\eta}}$ and $J_{\overline{\eta}}$ over $U'$. We then know that the (\'{e}tale) fundamental group $\pi_1(U', \overline{\eta})$ acts continuously on $H_{\acute{e}t}^1(\mc{A}_{\overline{\eta}}, \mathbb{Q}_{\ell}) = H_{\acute{e}t}^1(A_{\overline{\eta}}, \mathbb{Q}_{\ell})$, $H_{\acute{e}t}^1(\mc{B}_{\overline{\eta}}, \mathbb{Q}_{\ell}) = H_{\acute{e}t}^1(B_{\overline{\eta}}, \mathbb{Q}_{\ell})$ and $H_{\acute{e}t}^1(\mc{J}_{\overline{\eta}}, \mathbb{Q}_{\ell}) = H_{\acute{e}t}^1(J_{\overline{\eta}}, \mathbb{Q}_{\ell})$ (see \cite[Appendix 1.8 Proposition]{Freitag}).

            Similarly, for each closed point $s \in D \setminus U$, let $\delta_s \in H_{\acute{e}t}^1(C_{\overline{\eta}}, \mathbb{Q}_{\ell})$ be the vanishing cycle, unique up to conjugation, corresponding to the point $s$ and let $E \subset H_{\acute{e}t}^1(C_{\overline{\eta}}, \mathbb{Q}_{\ell})$ be the $\mathbb{Q}_{\ell}$-vector subspace generated by all the elements $\delta_s$. Note that the monodromy action of the tame fundamental group on the space of vanishing cycles $E$ is irreducible \cite[Chapter 3, Corollary 7.4]{Freitag}. Recall that in this scenario, the Picard-Lefschetz formula says $\gamma_s(u)x = x \pm \overline{u}\langle x, \delta_s \rangle \delta_s$ (see \cite[Chapter 3 Theorem 7.1]{Freitag}).

            \begin{lemma}
                Under the assumptions above, either $A_{\overline{\eta}} = 0$ or $A_{\overline{\eta}} = B_{\overline{\eta}}$.
            \end{lemma}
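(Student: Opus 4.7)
The plan is to exploit the fact that the inclusion of abelian varieties $A_{\overline{\eta}} \subset B_{\overline{\eta}} \subset J_{\overline{\eta}}$ translates, via $\zeta_{\mathbb{Q}_l}$ and the isomorphism $w_*$ of \autoref{fact3}, into inclusions of subspaces of $H_{\acute{e}t}^1(C_{\overline{\eta}}, \mathbb{Q}_l)$, and then to analyze the resulting subspaces using the continuous $\pi_1(U', \overline{\eta})$-action together with the Picard-Lefschetz formula recalled in \autoref{sec:monLefPen}. Concretely, set
\[
    V_A := (w_* \circ \zeta_{\mathbb{Q}_l})\bigl(H_{\acute{e}t}^1(A_{\overline{\eta}}, \mathbb{Q}_l(1-p))\bigr), \qquad V_B := (w_* \circ \zeta_{\mathbb{Q}_l})\bigl(H_{\acute{e}t}^1(B_{\overline{\eta}}, \mathbb{Q}_l(1-p))\bigr).
\]
By the proposition immediately preceding the statement, $V_A \subset \ker(r_{\overline{\eta}*}) = E$, and since $B_{\overline{\eta}}$ is by definition the Jacobian attached to the vanishing cohomology, one has $V_B = E$. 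Because $A_{\overline{\eta}}$ and $B_{\overline{\eta}}$ both spread to abelian schemes over $U'$, and because the morphisms $\zeta_{\mathbb{Q}_l}$ and $w_*$ are natural (functorial in base change), the subspace $V_A \subset H_{\acute{e}t}^1(C_{\overline{\eta}}, \mathbb{Q}_l)$ is stable under $\pi_1(U', \overline{\eta})$.

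The next step is to dichotomize via Picard-Lefschetz. For every $s \in D \setminus U$ and $u \in \hat{\mathbb{Z}}(1)$, the local monodromy acts by
\[
    \gamma_s(u)\cdot x = x \pm \overline{u}\langle x, \delta_s \rangle \delta_s,
\]
so for $x \in V_A$ the element $\gamma_s(u)x - x = \pm\overline{u}\langle x,\delta_s\rangle \delta_s$ lies in $V_A$. Thus either $\langle x, \delta_s\rangle = 0$ for every $x \in V_A$ and every $s$, or else some $\delta_s$ belongs to $V_A$. In the second case, the stability of $V_A$ under the whole tame fundamental group $\pi_1^t(U, \overline{\eta})$ forces all conjugates of $\delta_s$ to lie in $V_A$; since the vanishing cycles generate $E$ and form a single $\pi_1^t(U, \overline{\eta})$-orbit up to sign (the classical irreducibility of the vanishing cohomology for Lefschetz pencils, see \cite[Chapter 3]{Freitag} or \cite{Deligne}), one concludes $V_A \supseteq E$ and hence $V_A = E = V_B$, giving $A_{\overline{\eta}} = B_{\overline{\eta}}$.

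In the first case, $V_A$ lies in the subspace of $E$ orthogonal to every vanishing cycle with respect to the intersection pairing on $H_{\acute{e}t}^1(C_{\overline{\eta}}, \mathbb{Q}_l)$. Since the $\delta_s$ span $E$ and the restriction of the Poincar\'e pairing to $E$ is non-degenerate (again a classical property of the vanishing cohomology for Lefschetz pencils), this forces $V_A = 0$, and therefore $A_{\overline{\eta}} = 0$. Either way, the dichotomy $A_{\overline{\eta}} = 0$ or $A_{\overline{\eta}} = B_{\overline{\eta}}$ is established.

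The step I expect to be the main obstacle is the $\pi_1(U', \overline{\eta})$-equivariance of the embedding $w_* \circ \zeta_{\mathbb{Q}_l}\colon H_{\acute{e}t}^1(A_{\overline{\eta}}, \mathbb{Q}_l(1-p)) \hookrightarrow H_{\acute{e}t}^1(C_{\overline{\eta}}, \mathbb{Q}_l)$: this requires chasing the naturality of $\zeta$ and $w$ through the spreads $\mc{A} \to U'$, $\mc{B} \to U'$, and $\mc{J} \to U'$, making sure that the polarization pulled back from $\mc{J}$ is compatible over the whole base. The remaining ingredients, irreducibility of the vanishing cycles and non-degeneracy of the pairing on $E$, will be imported from \cite{Freitag} and \cite{Deligne}; the Picard-Lefschetz computation itself is then a one-line consequence once these are in place.
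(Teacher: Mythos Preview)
Your overall strategy matches the paper's, but there is a genuine gap at the point where you pass from $\pi_1(U',\overline{\eta})$-stability of $V_A$ to applying the local monodromy $\gamma_s(u)$. The element $\gamma_s(u)$ lives in $\pi_1^t(U,\overline{\eta})$, not in the finite-index subgroup $\pi_1(U',\overline{\eta})$ coming from the cover $D'\to D$, so the line ``for $x\in V_A$ the element $\gamma_s(u)x-x$ lies in $V_A$'' is unjustified as written. The same issue recurs in your second case, where you invoke ``stability of $V_A$ under the whole tame fundamental group $\pi_1^t(U,\overline{\eta})$'' without having earned it.

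The paper closes exactly this gap by an index trick: if $m=[\pi_1(U,\overline{\eta}):\pi_1(U',\overline{\eta})]$, then $(\gamma_s(u))^m\in\pi_1(U',\overline{\eta})$. Using $\langle\delta_s,\delta_s\rangle=0$ one checks by induction that $(\gamma_s(u))^m x = x \pm m\,\overline{u}\langle x,\delta_s\rangle\delta_s$, so from $(\gamma_s(u))^m x\in V_A$ one extracts $\overline{u}\langle x,\delta_s\rangle\delta_s=\tfrac{1}{m}\bigl((\gamma_s(u))^m x\mp x\bigr)\in V_A$, and hence $\gamma_s(u)x\in V_A$. This upgrades $V_A$ to a genuine $\pi_1^t(U,\overline{\eta})$-submodule of $E$. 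At that point the paper simply invokes the absolute irreducibility of $E$ as a $\pi_1^t(U,\overline{\eta})$-module (\cite[Chapter III \S 7 Corollary 4]{Freitag}) to conclude $V_A=0$ or $V_A=E$, which is cleaner than your orthogonality/conjugacy dichotomy and avoids appealing separately to non-degeneracy of the pairing on $E$ and transitivity on vanishing cycles. Once you insert the $m$-th power step, either route works; without it, the argument does not go through.
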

            
            \begin{proof}
                By \cite[Proposition 15]{BanGul} and the identification of the space of vanishing cycles with the kernel of the Gysin morphism
                \[
                    r_{\bar\eta *}: H^1_{\acute et}(C_{\bar\eta},\mathbb{Q}_\ell) \to H^3_{\acute et}(S_{\bar\eta},\mathbb{Q}_\ell)
                \]
                (see \cite[\S 4.3]{Deligne}), the image of the composition
                \[
                    H^1_{\acute et}(A_{\bar\eta},\mathbb{Q}_\ell(1-p)) \xrightarrow{\;\zeta_{\mathbb{Q}_\ell}\;} H^1_{\acute et}(J_{\bar\eta},\mathbb{Q}_\ell(1-p)) \xrightarrow{\;\omega_*\;} H^3_{\acute et}(S_{\bar\eta},\mathbb{Q}_\ell)
                \]
                is contained in the space of vanishing cycles \(E\).

                The morphism \(\zeta_{\mathbb{Q}_\ell}\) is injective and compatible with the action of the fundamental group
                \(\pi_1(U',\bar\eta)\), while \(\omega_*\) is an isomorphism
                (see \cite[Remark 4]{BanGul}). Hence the image
                \[
                    E_0 := Im(\omega_* \circ \zeta_{\mathbb{Q}_\ell})
                \]
                is a $\pi_1(U',\bar\eta)$-stable subspace of \(E\).

                By spreading out the correspondences inducing $\omega_*$ and the morphisms defining $\zeta_{\mathbb{Q}_\ell}$ over a suitable Zariski open subset of the base, the resulting action is compatible with the monodromy action of the tame fundamental group $\pi_1^t(U,\bar\eta)$ (see \cite[\S 2]{BanGul}).

                According to \cite[Chapter 3, Corollary 7.4]{Freitag}, the vanishing cycles representation $E$ is irreducible as a $\pi_1^t(U,\bar\eta)$-module.

                After replacing the base field by a finitely generated subfield and choosing $\ell \neq \operatorname{char}(k)$, the Tate–Zarhin theorem \cite{Zarhin} implies that $H^1_{\acute et}(A_{\bar\eta},\mathbb{Q}_\ell)$ is a semisimple $\pi_1(U',\bar\eta)$-representation. Consequently, its image $E_0 \subset E$ is a semisimple $\pi_1^t(U,\bar\eta)$-subrepresentation.

                Since $E$ is irreducible as a $\pi_1^t(U,\bar\eta)$-module, it follows that either $E_0 = 0$ or $E_0 = E$.

                If $E_0 = 0$, then $H^1_{\acute et}(A_{\bar\eta},\mathbb{Q}_\ell) = 0$. By Tate's isogeny theorem \cite{Tate} and \cite{Zarhin} this implies that $\dim A_{\bar\eta} = 0$, and hence $A_{\bar\eta} = 0$.

                If $E_0 = E$, then
                \[
                    H^1_{\acute et}(A_{\bar\eta},\mathbb{Q}_\ell(1-p))
                    \cong
                    H^1_{\acute et}(B_{\bar\eta},\mathbb{Q}_\ell(1-p))
                \]
                as $\pi_1(U',\bar\eta)$-representations. Again by the Tate isogeny theorem \cite{Tate} and \cite{Zarhin} this induces an isogeny between $A_{\bar\eta}$ and $B_{\bar\eta}$. Since both are abelian subvarieties of $J_{\bar\eta}$ and their $\ell$-adic cohomology realizes the same subrepresentation of $H^1_{\acute et}(J_{\bar\eta},\mathbb{Q}_\ell)$, this isogeny is induced by the identity on $J_{\bar\eta}$, and therefore $A_{\bar\eta} = B_{\bar\eta}$.
            \end{proof}
          
\bigskip
            
            Assume that $U$ is the base integral scheme $T$ over $k$ introduced in subsection \ref{part.b.connection}. That is, we will be studying the global case. Again, let $U_c$ be a $c$-open subset 
            in $U=T$ by \autoref{ggp}. In the dual space $(\mathbb{P}^d)^\vee$, if $\xi$ is the generic point and $\overline{\xi}$ the corresponding geometric generic point, then for any closed point $P \in U_c$ one has the isomorphism $C_P \cong C_{\overline{\xi}}$ and for any two closed points $P, P' \in U_c$, one has the scheme-theoretic isomorphism $C_P \cong C_{P'}$.
    
            We are now ready 
            to prove part (b) of a theorem on the Gysin kernel.
    
            \begin{theorem}
                For very general $t \in U$, either $A_{t} = 0$ or $A_{t} = B_{t}$, i.e. there exists a c-open subset $U_c \subset U$ such that either $A_{\overline{\eta}} = 0$, in which case $A_{t} = 0$ for all $t \in U_c$, or $A_{\overline{\eta}} = B_{\overline{\eta}}$, in which case $A_{t} = B_{t}$ for all $t \in U_c$.
            \end{theorem}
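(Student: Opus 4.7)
The plan is to combine the previous lemma with the transfer results from Lemma \ref{ell}. First I would apply the preceding lemma to obtain the dichotomy at the geometric generic point: either $A_{\overline{\eta}} = 0$ or $A_{\overline{\eta}} = B_{\overline{\eta}}$. This is the essential content — everything else is a specialization argument spreading this dichotomy from $\overline{\eta}$ to all very general closed points.

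Next I would take $T = U$ as the base integral scheme, and invoke \autoref{ggp} to produce a c-open subset $U_0 \subset U$ such that every closed point $t \in U_0$ is scheme-theoretically isomorphic to the geometric generic point $\overline{\eta}$, with the induced isomorphisms $\kappa_t^{f_T}\colon C_t \to C_{\overline{\eta}}$ and $\kappa_t^{g_T}\colon \mathcal{S}_t \to \mathcal{S}_{\overline{\eta}}$ preserving rational equivalence and being compatible with the closed embeddings $r_t$ and $r_{\overline{\eta}}$. These in turn induce the homomorphisms $l_t\colon J_t \to J_{\overline{\eta}}$, which by construction (as compositions of isomorphisms in the diagram relating the symmetric products, Chow groups, and Jacobians) are themselves isomorphisms of abelian varieties.

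Then I would invoke \autoref{ell}, which tells us that $l_t(A_t) = A_{\overline{\eta}}$ and $l_t(B_t) = B_{\overline{\eta}}$ for every closed point $t \in U_0$. Since $l_t$ is an isomorphism, this transfers the dichotomy pointwise: if $A_{\overline{\eta}} = 0$, then $A_t = l_t^{-1}(A_{\overline{\eta}}) = l_t^{-1}(0) = 0$ for all $t \in U_0$; if instead $A_{\overline{\eta}} = B_{\overline{\eta}}$, then $A_t = l_t^{-1}(A_{\overline{\eta}}) = l_t^{-1}(B_{\overline{\eta}}) = B_t$ for all $t \in U_0$. Since $U_0$ is a c-open subset of $U$, its complement is a countable union of proper Zariski closed subsets, so $U_0$ indeed parametrizes very general points in the sense required by the theorem.

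The main conceptual obstacle is not in the final specialization argument, which is essentially formal, but rather in having established the preceding results correctly in the positive characteristic setting: namely that the dichotomy lemma for the geometric generic point carries over (which uses the tame fundamental group action, the Picard-Lefschetz formula, and the absolute irreducibility of the module of vanishing cycles from \cite[Chapter III \S 7 Corollary 4]{Freitag}), and that the transfer isomorphisms $l_t$ genuinely carry the subvariety $A_t$ onto $A_{\overline{\eta}}$ (Lemma \ref{ell}). Once those two ingredients are in hand, the final step is a direct application and requires no additional machinery.
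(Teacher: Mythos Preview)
There is a genuine gap in your argument. The preceding lemma establishing the dichotomy $A_{\overline{\eta}} = 0$ or $A_{\overline{\eta}} = B_{\overline{\eta}}$ is proved only for the geometric generic point of a \emph{Lefschetz pencil} $D$ (a projective line in $(\mathbb{P}^d)^\vee$), not for the geometric generic point of the full parameter space $U$. The entire monodromy machinery --- the vanishing cycles $\delta_s$, the Picard--Lefschetz formula, the tame local groups $\pi_{1,s}$, and the irreducibility of $E$ --- is set up for a one-dimensional base. You cannot simply invoke it at the geometric generic point of $U$, which has dimension $d > 1$.

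The paper closes this gap by an extra bridging step you have omitted. One first uses the Grassmannian of lines in $(\mathbb{P}^d)^\vee$ and \autoref{CountContradiction} to choose a line $D$ which (i) gives a Lefschetz pencil and (ii) meets the c-open $U_0 \subset U$ in some closed point $P_0$. The dichotomy lemma then applies at the geometric generic point $\overline{\eta}$ of $D$. Next one applies \autoref{ell} \emph{twice}, for two different families: first for the pencil $f_D$ to transfer the conclusion from $\overline{\eta}$ down to the closed point $P_0$; then for the global family $f_T$ with $T = U$ to transfer from $P_0$ up to the geometric generic point $\overline{\xi}$ of $U$, and thence to every closed point of $U_0$. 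Your proposal collapses these two transfers into one, which only works once you have the dichotomy at $\overline{\xi}$ --- and that is precisely what requires the detour through a pencil.
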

    
            \begin{proof}
                Recall that in our setting we can always work with a Lefschetz pencil (see \cite[Chapter 3 Proposition 1.10]{Freitag}). Thus we can choose a line $D \in Z = (\mathbb{P}^{d})^{\vee} \setminus U_{0}$ giving a Lefschetz pencil $f_{D}$, where $D \cap U_{c} \neq 0$. Let $P_0 \in D \cap U_c$ be a point and $\overline{\eta}$ be the geometric generic point of $D$. By the Proposition above, we have either $A_{\overline{\eta}} = 0$ or $A_{\overline{\eta}} = B_{\overline{\eta}}$.

                Suppose that $A_{\overline{\eta}} = 0$. Then applying \autoref{ell} on the pencil $f_D$, we get that $A_{P_0} = 0$. Applying \autoref{ell} to the family $f_T$, we obtain  $A_{\overline{\xi}} = 0$ and so for each closed point $P \in U_c$, the abelian variety $A_P = 0$.

                Suppose that $A_{\overline{\eta}} = B_{\overline{\eta}}$. Then similarly applying \autoref{ell} to $f_D$ and the family $f_T$ we get $A_{P_0} = B_{P_0}$ and $A_{\overline{\xi}} = B_{\overline{\xi}}$, respectively. Thus, for each closed point $P \in U_c$, we obtain  $A_P = B_P$.
            \end{proof}

\bigskip


Claudia Schoemann, Mathematisches Seminar der Christian-Albrechts-Universität zu Kiel, 24118 Kiel, Germany; Simion Stoilow Institute of Mathematics of the Romanian Academy (IMAR), Bucharest, Romania, e-mail: schoemann@math.uni-kiel.de

\medskip

Skylar Werner, Mathematisches Institut der Universität Göttingen, Bunsenstrasse 3-5, 37073 Göttingen, Germany, email: Skylar.Werner@stud.uni-goettingen.de

\end{document}